\newenvironment{mathframed}{\framed%
\allowdisplaybreaks
\vspace*{-\abovedisplayskip}\noindent}{%
\vspace*{-\dimexpr\baselineskip+\topsep}\endframed}
\numberwithin{equation}{section}
\newtheorem{theorem}{Theorem}[section]
\newtheorem{lemma}[theorem]{Lemma}
\newtheorem{proposition}[theorem]{Proposition}
\theoremstyle{definition}
\newtheorem{definition}[theorem]{Definition}
\theoremstyle{remark}
\newcommand{\p}{{\partial}}
\newcommand{\nab}{\nabla}
\newcommand{\mct}{\mathcal{T}_h}
\newcommand{\jump}[1]{\left[\hspace{-0.025in}\left[#1\right]\hspace{-0.025in}\right]}
\newcommand{\Div}{{\rm div}\,}
\newcommand{\pol}{\EuScript{P}}
\newcommand{\bpol}{\boldsymbol{\pol}}
\newcommand{\bld}[1]{\boldsymbol{#1}}
\newcommand{\bt}{\bld{t}}
\newcommand{\bv}{\bld{v}}
\newcommand{\bw}{\bld{w}}
\newcommand{\bs}{\bld{s}}
\newcommand{\bn}{\bld{n}}
\newcommand{\bu}{\bld{u}}
\newcommand{\bV}{\bld{V}}
\newcommand{\mbV}{\mathring{\bV}}
\newcommand{\mY}{\mathring{Y}}
\newcommand{\hY}{\hat{Y}}
\newcommand{\bPi}{\bld{\Pi}}
\newcommand{\bH}{\bld{H}}
\newcommand{\bphi}{\bm \phi}
\newcommand{\calE}{\mathcal{E}}
\newcommand{\bbR}{\mathbb{R}}
\newcommand{\PST}{\mct^{\rm PS}}
\newcommand{\WFT}{\mct^{\rm WF}}
\newcommand{\calS}{\mathcal{S}}
\newcommand{\calT}{\mathcal{T}}
\newcommand{\calF}{\mathcal{F}}
\newcommand{\calQ}{\mathcal{Q}}
\begin{document}

\title[Powell-Sabin and Worsey Farin]{Low-order divergence-free approximations for the Stokes problem on Worsey-Farin and Powell-Sabin splits}


\author[M. Fabien]{Maurice Fabien \textsuperscript{\textdagger}}
\address{\textsuperscript{\textdagger} Division of Applied Mathematics, Brown University, Providence, RI 02912, USA}
\email{fabien@brown.edu}

\author[J. Guzm\'an]{Johnny Guzm\'an\textsuperscript{\textdagger}}
\address{\textsuperscript{\textdagger} Division of Applied Mathematics, Brown University, Providence, RI 02912, USA}
\email{johnny\_guzman@brown.edu}
\thanks{J.~Guzm\'an was partially supported by NSF grant DMS-1913083,}

\author[M. Neilan]{Michael Neilan \textsuperscript{\textdaggerdbl}}
\address{\textsuperscript{\textdaggerdbl}Department of Mathematics,
University of Pittsburgh}
\email{neilan@pitt.edu}
\thanks{M.~Neilan and A.~Zytoon was partially supported by NSF grant DMS-2011733,}

\author[A. Zytoon]{Ahmed Zytoon \textsuperscript{\textdaggerdbl}}
\address{\textsuperscript{\textdaggerdbl}Department of Mathematics,University of Pittsburgh}
\email{AMZ56@pitt.edu}

\maketitle

\begin{abstract}  
We derive low-order, inf-sup stable and divergence-free finite element approximations for the Stokes problem
using Worsey-Farin splits in three dimensions and Powell-Sabin splits in two dimensions. 
The velocity space simply consists of continuous, piecewise linear polynomials, where as
the pressure space is a subspace of piecewise constants with weak continuity properties
at singular edges (3D) and singular vertices (2D).
We discuss implementation aspects that arise when coding the pressure space, and in particular, show that the pressure constraints can be enforced at an algebraic level.
\end{abstract}
\medskip

\keywords{divergence--free, low--order, Worsey--Farin, Powell--Sabin}
\smallskip

\date{}

\section{Introduction}
The first inf-sup stable finite element spaces that yield divergence-free approximations for incompressible fluid models on simplicial triangulations were given in the classical paper by Scott and Vogelius  \cite{ScottVogelius85}. Their lowest order pair, defined on a two-dimensional domain, used quartic Lagrage finite elements and piecewise cubic polynomials for the discrete velocity and pressure spaces, respectively; since then, there has been several efforts to find lower order inf-sup stable and conforming finite element spaces that produce divergence-free approximations \cite{qin1994convergence, qin2007stability, arnold1992quadratic, zhang2005,zhang2008p1,Zhang09A,guzman2014conforming, guzman2018inf, ZhangS2011}.

If we confine ourselves to Lagrange elements for the velocity space, then a reduction of polynomial
degree, while preserving stability, can be done on certain splits (i.e., refinements) of a simplicial mesh.
For example, Zhang  proved
stability of the three-dimensional cubic-quadratic pair on Alfeld splits \cite{zhang2005}, and stability
of the three-dimensional quadratic-linear pair on Worsey-Farin splits \cite{ZhangS2011}.
On the other hand, Zhang suggests that piecewise linear Lagrange elements for the velocity are unstable on Worsey-Farin splits \cite[p.244]{ZhangS2011}. One of the main objectives of this paper is to show that indeed the linear case is stable if coupled with the correct pressure space. 

The main goal of this paper is to construct inf-sup stable and divergence-free pairs
on simplicial (split) meshes using a linear-constant velocity-pressure pair on Worsey-Farin splits (3D)
and Powell-Sabin splits (2D).
As far as we are aware, this is the first three-dimensional pair with these
properties using linear Lagrange elements for the discrete velocity space.
The main driver of our construction is an explicit characterization 
of the divergence operator acting on the Lagrange finite element space
developed in  \cite{GuzmanLischkeNeilan, GLN20}. 
Such characterizations naturally lead to the definitions of the discrete pressure
space.  In particular, the pressure space is a subspace of piecewise constants
with weak continuity constraints on singular edges (Worsey-Farin) and singular vertices (Powell-Sabin).

Our results in two dimensions are similar to
those in \cite{zhang2008p1}, 
where Zhang showed stability of a Stokes pair with linear Lagrange elements for the velocity space
on Powell--Sabin splits.  However, the pressure space was not characterized in \cite{zhang2008p1} and as a result, a pressure basis is not explicitly constructed.  Due to the implicit definition of the pressure space, one is forced to solve the resulting algebraic system by the iterated penalty method (IPM).  In contrast, the explicit characterization of the pressure space we provide opens the door to a library of solvers based on the standard mixed formulation.


An advantage of the proposed schemes is their low computational cost and simple velocity spaces, which
are supported in finite element software libraries.  On the other hand, the discrete pressure spaces
are subspaces of piecewise constants with weak continuity properties at singular
edges and singular vertices; these spaces are nonstandard, and in particular are not readily 
available on computational software packages.  Nonetheless, we show that these weak continuity properties
can be enforced entirely at the algebraic level.  One can simply form the algebraic saddle point problem
using the full (unstable) linear-constant pair, and then perform elementary row and column operations
to this system to enforce the weak continuity constraint.  As a result, the proposed discretizations
can be implemented on standard finite element software packages (e.g., FEniCS).

We note that degrees of freedom and commuting projections were given in \cite{GuzmanLischkeNeilan, GLN20}.  In fact, an entire exact sequence of spaces were presented. However, stability (e.g., inf-sup stability) was not shown, which is something we carry out here. 

Moreover, we provide numerical experiments to validate our theoretical results. In particular, we show that the computed solution is satisfying the standard error estimates in mixed formulation. Also, we provide a time comparison between the stiffness matrix modification method described above and the iterated penalty method.

The paper is organized as follows. In the next section we give notation that will be used in the rest of the paper.  In Section \ref{notation} we provide the notation used
throughout the paper and introduce the Stokes problem.
In Section \ref{PS} prove the inf-sup stability of a low-order finite element pair on Powell-Sabin splits. In Section \ref{WF} we prove stability of the analogous three-dimensional pair on Worsey-Farin splits. In Section \ref{implPS} we discuss implementation aspects on Powell-Sabin splits and in Section \ref{implWF} we do the same for Worsey-Farin splits. Finally, in Section \ref{numerical} we provide numerical experiments.

\section{Preliminaries}\label{notation}
In this section we develop basic notation that we use throughout the paper. We provide this in the following list:
\begin{itemize}
\item $\mct$ is a shape-regular, simplicial triangulation of 
a contractible polytope $\Omega \subset \mathbb{R}^d\ (d=2,3)$.

\item $h_T = {\rm diam}(T)$ for all $T\in \calT_h$ and $h = \max_{T\in \calT_h} h_T$.

\item For an $n$-dimensional simplex $S$ ($n\le d$) and $m\in \{0,\ldots n\}$, denote by $\Delta_m(S)$
the set of $m$-dimensional simplices of $S$.

\item $\pol_r(S)$ denotes the space of polynomials of degree $\le r$ with domain $S$.
Analogous spaces of vector-valued spaces are given in boldface, e.g., $\bpol_r(S) = [\pol_r(S)]^d$.

\item We define 
the following function spaces on $\Omega$:
\begin{align*}
L^2( \Omega)&:=\{w:\Omega \mapsto \bbR:\ \|w\|_{{\it L}^2(\Omega)}:=(\int_{\Omega}\ {|w|^2}\,dx)^{1/2}<\infty \},\\
H^m(\Omega)&:=\{w:\Omega \mapsto \bbR:\ \|w\|_{{\it H}^m(\Omega)}:= (\sum_{|\beta| \leq m} \|{\it D}^\beta w\|_{{\it L}^2( \Omega)}^2)^{1/2}<\infty \},
\end{align*}
and the spaces with boundary conditions:
\begin{align*}
L_0^2( \Omega)&:=\{w\in {\it L}^2( \Omega):\ \int_{\Omega}\ {w}\,dx =0  \},\\
H_0^m(\Omega)&:=\{{w}\in{\it H}^m(\Omega):\ {D}^\beta {w}|_{\partial\Omega}=0,  \forall \beta :\ |\beta| \leq m-1   \}.
\end{align*}
Analogous vector-valued spaces are denoted in boldface, e.g., $\bH^1(\Omega) = [H^1(\Omega)]^d$.

\item For a simplicial triangulation $\calQ_h$ of $\Omega$
 we define the spaces 
of piecewise polynomials
\begin{alignat*}{2}
&\pol_k(\calQ_h) = \prod_{K\in \calQ_h} \pol_k(K),\qquad &&\bpol_k(\calQ_h) = \prod_{K\in \calQ_h} \bpol_k(K),\\
&\mathring{\pol}_k(\calQ_h) = \pol_k(\calQ_h)\cap L^2_0(\Omega),\qquad && \mathring{\bpol}_k^c(\calQ_h) = \bpol_k(\calQ_h)\cap \bH^1_0(\Omega).
\end{alignat*}
Thus $\mathring{\pol}_k(\calQ_h)$ consists of piecewise polynomials of degree $\le k$ with respect 
to the triangulation $\calQ_h$
with mean zero, and $\mathring{\bpol}_k^c(\calQ_h)$ is the space of continuous, piecewise
polynomials of degree $\le k$ with vanishing trace (i.e., the $k$th degree vector-valued Lagrange finite element space).

\item The constant $C$ denotes a generic positive constant, independent of the mesh parameter $h$.
\end{itemize}

\subsection{The Stokes problem}
The Stokes equations defined on a polygonal domain $\Omega \subset \bbR^d$ with Lipschitz continuous boundary $\p\Omega$ is given by the system of equations
\begin{subequations}
\label{eqn:3DProblem}
\begin{alignat}{2}
\label{eqn:3DProblem1}
-\nu \Delta {\bu} + {\nab} p & = {{\bm f}}\qquad &&\text{in }\Omega,\\
\label{eqn:3DProblem2}
\Div \bu & =0 \qquad &&\text{in } \Omega,\\
\label{eqn:3DProblem3}
\bu & = 0\qquad &&\text{on }\p \Omega,
\end{alignat}
\end{subequations}
where the velocity $ \bu = ( u_1, ..., u_d)^\intercal$ and pressure $ p$
are functions of $x = ( x_1, ..., x_d)^\intercal$,
and $ \nab$, $\Delta$ denote the gradient operator and vector Laplacian operator with respect to 
$x$ respectively. In \eqref{eqn:3DProblem1}, $\nu$ is the viscosity.

 The weak formulation for \eqref{eqn:3DProblem} reads:  Find $({\bu}, p)\in   \bH_0^1(\Omega) \times  L_0^2( \Omega)$ such that $\forall ({\bv}, q)\in  \bH_0^1(\Omega)\times  { L}_0^2( \Omega)$ we have
\begin{subequations}\label{Eq:CTS}
\begin{alignat}{2}
\nu\int_{\Omega}{\nabla{\bu}:\nabla{\bv}}- \int_{\Omega}{(\Div \bv) p}&=\int_{\Omega}{{{\bm f}\cdot{\bv}}},\\
 \int_{\Omega}{(\Div{\bu}) q}&=0.
\end{alignat}
\end{subequations}
It is well known that the problem \eqref{Eq:CTS} has a unique solution \cite{GR}.

Let $\mathring{\bV}_h\times \mathring{Y}_h \subset \bH^1_0(\Omega)\times L^2_0(\Omega)$
be a conforming and finite dimensional pair.  Then
the finite element method for \eqref{eqn:3DProblem},
based on the standard velocity-pressure formulation,
seeks $(\bu_h,p_h)\in \mathring{\bV}_h\times \mathring{Y}_h$
such that $\forall (\bv,q)\in \mathring{\bV}_h\times \mathring{Y}_h$ we have
\begin{subequations}\label{Eq:FEM_VP}
\begin{alignat}{2}
\nu\int_{\Omega}{\nabla{\bu_h}:\nabla{\bv}}- \int_{\Omega}{(\Div \bv) p_h}&=\int_{\Omega}{{{\bm f}\cdot{\bv}}},\\
 \int_{\Omega}{(\Div{\bu_h}) q}&=0.
\end{alignat}
\end{subequations}
The discrete problem \eqref{Eq:FEM_VP} is well-posed if and only
if the pair satisfies the inf-sup condition
\[
\sup_{0\neq \bv\in \mathring{\bV}_h} \frac{\int_\Omega (\Div \bv)q}{\|\nabla \bv\|_{L^2(\Omega)}}\ge \beta \|q\|_{L^2(\Omega)},
\]
for some $\beta>0$.

\section{Inf-sup stability on Powell--Sabin Triangulations}\label{PS}

Let $\mct$ be a simplicial, shape-regular triangulation of $\Omega\subset \bbR^2$.
The Powell--Sabin refinement of $\mct$, denoted by $\PST$ is obtained
by the following procedure.  First connect the incenter
of each triangle $T\in \mct$ with its three vertices.
Next, the incenters of each adjacent pair of triangles
are connected with an edge.  If $T$ has a boundary edge,
then the midpoint of the boundary edge is connected to the incenter
of $T$.  Thus, this procedure splits each triangle
in $\mct$ into six subtriangles; cf.~Figure \ref{fig:LocalPS}.

\begin{figure}
\begin{center}
\begin{tikzpicture}[scale=1.5]
\coordinate (C1) at (-1,0);
\coordinate (C2) at (1,0);
\coordinate (C3) at (1.5,1);
\coordinate (C4) at (2.5,-1);

\draw[-](C1)--(C2)--(C3)--(C1);
\draw[-](C1)--(C2)--(C4)--(C1);
\end{tikzpicture}
\begin{tikzpicture}[scale=1.5]
\coordinate (C1) at (-1,0);
\coordinate (C2) at (1,0);
\coordinate (C3) at (1.5,1);
\coordinate (C4) at (2.5,-1);

\coordinate (M13) at (0.25,0.5);
\coordinate (M23) at (1.25,0.5);

\coordinate (M14) at (0.75,-0.5);
\coordinate (M24) at (1.75,-0.5);

\coordinate (P) at (0.8624590165,0);

\draw[-](C1)--(C2)--(C3)--(C1);
\draw[-](C1)--(C2)--(C4)--(C1);
\node[inner sep = 0pt, minimum size=2.pt,fill = black!100,circle] (N1) at (0.79,0.34) {};
\node[inner sep = 0pt, minimum size=2.pt,fill = black!100,circle] (N2) at (0.92,-0.27) {};
\draw[-](N1)--(C1);
\draw[-](N1)--(N2);
\draw[-](N1)--(C2);
\draw[-](N1)--(C3);
\draw[-](N2)--(C1);
\draw[-](N2)--(C2);
\draw[-](N2)--(C4);
\draw[-](N1)--(M23);
\draw[-](N1)--(M13);
\draw[-](N2)--(M14);
\draw[-](N2)--(M24);

\node[inner sep = 0pt,minimum size=3pt,fill=red!100,circle] (n2) at (M14)  {};
\node[inner sep = 0pt,minimum size=3pt,fill=red!100,circle] (n2) at (M24)  {};
\node[inner sep = 0pt,minimum size=3pt,fill=red!100,circle] (n2) at (M13)  {};
\node[inner sep = 0pt,minimum size=3pt,fill=red!100,circle] (n2) at (M23)  {};
\node[inner sep = 0pt,minimum size=3pt,fill=red!100,circle] (n2) at (P)  {};
\end{tikzpicture}
\end{center}
\caption{\label{fig:LocalPS} Two triangles in the mesh $\mct$ (left) and their Powell--Sabin refinement (right).
The singular vertices in the Powell--Sabin refinement are depicted in red.}
\end{figure}
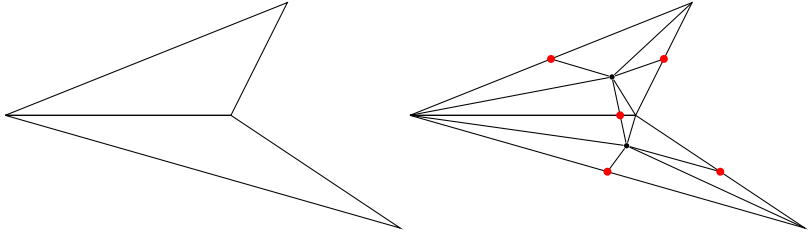

One feature of the Powell-Sabin triangulation is the presence of singular vertices.
\begin{definition}
We say that a vertex in a simplicial triangulation is {\em singular}
if the edges meeting at the vertex fall on exactly two straight lines.
\end{definition}

Let $\calS^I_h$ and $\calS^B_h$ denote the sets of interior and boundary singular vertices
in $\PST$, respectively, and set $\calS_h = \calS^I_h\cup \calS_h^B$.
Note that the cardinalities of the sets $\calS_h^I$ and $\calS^B_h$
correspond to the number of interior and boundary edges in $\mct$, respectively.
For $z\in \calS_h^I$, we denote by $\calT_z\subset \PST$
the set of four triangles that have $z$ as a vertex.
We write $\calT_z = \{K_z^{(1)},K_z^{(2)},K_z^{(3)},K_z^{(4)}\}$ 
with $K_z^{(j)}\in \PST$, labeled such that $K_z^{(j)}$ and $K_z^{(j+1)}$
have a common edge. Likewise, for $z\in \calS_h^B$, we 
set $\calT_z = \{K_z^{(1)},K_z^{(2)}\}\subset \PST$ to denote
the two triangles that have $z$ as a vertex.
We set $n_z = |\calT_z|$, i.e., $n_z = 4$ if $z\in \calS_h^I$
and $n_z = 2$ if $z\in \calS_h^B$.

\subsection{Finite Element Spaces on Powell--Sabin Triangulations}

It is well known that, on singular vertices, the divergence operator acting
on the Lagrange finite element space has ``weak continuity properties''; the precise
meaning of this statement is provided in the following lemma.    Its proof 
can be found in, e.g., \cite{ScottVogelius85,guzman2019scott}.
\begin{lemma}\label{lem:WeakContinuity}
For $z\in \calS_h$, and piecewise smooth function $q$, define
\[
\theta_z(q):=\left\{
\begin{array}{ll}
q|_{K_z^{(1)}}(z)-q|_{K_z^{(2)}}(z)+q|_{K_z^{(3)}}(z)-q|_{K_z^{(4)}}(z) & z\in \calS_h^I,\\
q|_{K_z^{(1)}}(z) - q|_{K_z^{(2)}} & z\in \calS_h^B.
\end{array}
\right.
\]
Then there holds $\theta_z(\Div \bv)=0$ for all $\bv\in \mathring{\bpol}_k^c(\PST)$.
\end{lemma}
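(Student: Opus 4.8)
The plan is to reduce the statement to a purely local computation around a single singular vertex $z$, exploiting the fact that $\bv$ is continuous across the edges of $\PST$ while $\Div\bv$ need not be. The key geometric observation is that, because $z$ is singular, the edges of $\PST$ emanating from $z$ lie on exactly two straight lines; I will choose coordinates so that these two lines are the $x_1$- and $x_2$-axes, and label the four triangles $K_z^{(1)},\dots,K_z^{(4)}$ cyclically so that consecutive triangles share an edge lying along one of the coordinate axes.

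\textbf{Step 1 (local reduction).} Fix $z\in\calS_h^I$ and write $\bv=(v_1,v_2)^\intercal$ with $\bv\in\mathring{\bpol}_k^c(\PST)$. Since $\Div\bv = \p_1 v_1 + \p_2 v_2$, it suffices to track how $\p_1 v_1$ and $\p_2 v_2$ jump as we move from one triangle to the next around $z$, evaluated at the point $z$ itself.

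\textbf{Step 2 (continuity across an edge controls the tangential derivative).} Suppose $K_z^{(j)}$ and $K_z^{(j+1)}$ share an edge $e$ lying on (say) the $x_1$-axis. Because $\bv$ is globally continuous, $v_1$ and $v_2$ agree along $e$; differentiating this identity in the direction tangent to $e$ (the $x_1$-direction) gives $\p_1 v_1|_{K_z^{(j)}} = \p_1 v_1|_{K_z^{(j+1)}}$ and $\p_1 v_2|_{K_z^{(j)}} = \p_1 v_2|_{K_z^{(j+1)}}$ at every point of $e$, in particular at $z$. The normal derivative $\p_2 v_i$ may jump. Thus crossing an edge on the $x_1$-axis preserves $\p_1 v_1(z)$ but possibly changes $\p_2 v_2(z)$; crossing an edge on the $x_2$-axis preserves $\p_2 v_2(z)$ but possibly changes $\p_1 v_1(z)$.

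\textbf{Step 3 (telescoping around the vertex).} Now form the alternating sum $\theta_z(\Div\bv)=\sum_{j=1}^4(-1)^{j+1}(\p_1v_1+\p_2v_2)|_{K_z^{(j)}}(z)$. Split it as $A+B$ with $A=\sum_{j}(-1)^{j+1}\p_1v_1|_{K_z^{(j)}}(z)$ and $B=\sum_{j}(-1)^{j+1}\p_2v_2|_{K_z^{(j)}}(z)$. With the cyclic labeling, the four shared edges alternate between the two axes: say $K_z^{(1)}\!-\!K_z^{(2)}$ and $K_z^{(3)}\!-\!K_z^{(4)}$ share edges on one axis, while $K_z^{(2)}\!-\!K_z^{(3)}$ and $K_z^{(4)}\!-\!K_z^{(1)}$ share edges on the other. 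Using Step 2 on the two edges of the first type, the terms in $A$ (or $B$, depending on which axis these edges lie on) cancel in pairs; using Step 2 on the two edges of the second type, the remaining sum also collapses. A short bookkeeping argument shows $A=0$ and $B=0$ separately, hence $\theta_z(\Div\bv)=0$. The boundary case $z\in\calS_h^B$ is the same computation with only two triangles and a single shared edge, giving $\Div\bv|_{K_z^{(1)}}(z)=\Div\bv|_{K_z^{(2)}}(z)$ after accounting for the tangential-derivative matching along that edge together with the zero trace of $\bv$ on $\p\Omega$.

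\textbf{Main obstacle.} The only delicate point is the bookkeeping in Step 3: one must be careful that the cyclic labeling of the triangles is compatible with the two-line structure of the singular vertex, so that ``consecutive shared edges lie on alternating axes,'' and that the alternating signs in $\theta_z$ line up with the pairwise cancellations coming from Step 2. Once the geometry is set up in the right coordinates, the algebra is a routine telescoping; since the result is classical (cf.~\cite{ScottVogelius85,guzman2019scott}), I would present the coordinate setup carefully and then let the cancellation be a one-line computation.
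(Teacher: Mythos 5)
The paper itself does not prove this lemma; it defers to \cite{ScottVogelius85,guzman2019scott}, and your argument is precisely the classical one from those references: split $\Div\bv$ into two tangential derivatives associated with the two lines through the singular vertex, use continuity of $\bv$ to see that each tangential derivative is single-valued across the edges lying on its own line, and let the alternating signs cancel the contributions in pairs. Your bookkeeping in Step 3 is correct: with the cyclic labeling the shared edges $e_{12}$ and $e_{34}$ lie on one line while $e_{23}$ and $e_{41}$ lie on the other, so the sum $A$ cancels via the pairs $(1,2)$ and $(3,4)$ and the sum $B$ via $(2,3)$ and $(4,1)$; the boundary case follows from the single shared interior edge together with the vanishing trace on the two collinear boundary edges, as you indicate.

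The one step that does not survive as written is the opening normalization ``choose coordinates so that these two lines are the $x_1$- and $x_2$-axes.'' The two lines at a singular vertex of $\PST$ are in general \emph{not} orthogonal (an interior singular vertex is the intersection of an edge of $\mct$ with the segment joining two incenters), and a rigid motion --- the only change of coordinates under which $\p_1 v_1+\p_2 v_2$ remains the divergence of the untransformed field --- cannot carry two non-orthogonal lines onto the coordinate axes. Two standard repairs: (i) avoid coordinates altogether by writing $\Div\bv=(\bt_1\cdot\nab)(\bs_1\cdot\bv)+(\bt_2\cdot\nab)(\bs_2\cdot\bv)$, where $\bt_1,\bt_2$ are unit tangents to the two lines and $\bs_1,\bs_2$ is the dual basis with $\bs_i\cdot\bt_j=\delta_{ij}$; each summand is then a tangential derivative, along one of the two lines, of a fixed linear functional of $\bv$, and your Steps 2--3 apply verbatim to each summand separately. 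Or (ii) keep the affine normalization $x=A\tilde x+b$ but transform the field as well, $\tilde{\bv}(\tilde x)=A^{-1}\bv(A\tilde x+b)$, which preserves continuity, the piecewise polynomial structure, and the vanishing trace, and satisfies $\widetilde{\Div}\tilde{\bv}(\tilde x)=\Div\bv(x)$, so the alternating sum is unchanged. With either fix your proof is complete and matches the cited argument.
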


Based on Lemma \ref{lem:WeakContinuity},
and on a discrete de Rham complex,
divergence--free
finite element pairs for the Stokes problem have
been constructed and analyzed in \cite{GuzmanLischkeNeilan,zhang2008p1} on Powell--Sabin triangulations.
Here, we focus on the lowest-order case,
where the velocity space is the linear Lagrange space,
and the pressure space is a subspace of piecewise constants
with a weak continuity property at singular vertices. We first define the spaces without boundary conditions and then the ones with boundary conditions. 
\begin{align*}
\bV_h^{\rm PS}:&=\bpol^c_1(\PST),\\
Y_h^{\rm PS}:&=\{q\in \pol_0(\PST):\ \theta_z(q)=0\ \forall z\in \calS_h^I\}.
\end{align*}
We now define an intermediate pressure space
\begin{align*}
\hY_h^{\rm PS}:&=\{q\in \pol_0(\PST):\ \theta_z(q)=0\ \forall z\in \calS_h\}.
\end{align*}
The spaces with boundary conditions are 
\begin{align*}
\mbV_h^{\rm PS}:&= \bV_h^{\rm PS} \cap \bH_0^1(\Omega),\\
\mY_h^{\rm PS}:&= \hY_h^{\rm PS} \cap L_0^2(\Omega).
\end{align*}

\subsection{Stability of $\mbV_h^{\rm PS}\times \mY_h^{\rm PS}$}
The stability of the pair $\mbV_h^{\rm PS}\times \mY_h^{\rm PS}$ is implicitly shown in \cite{BoffiGuzmanNeilan20}. Here we give more details. 
%
\begin{theorem}\label{lem:InfSup}
There holds $\Div \mbV_h^{\rm PS}\subseteq \mY_h^{\rm PS}$.  Moreover, the pair $\mbV_h^{\rm PS}\times \mY_h^{\rm PS}$ is an inf-sup stable pair for the Stokes problem, i.e.,
\[
\sup_{0\neq \bv\in \mbV_h^{\rm PS}} \frac{\int_{\Omega} (\Div \bv)q}{\|\nab \bv\|_{L^2(\Omega)}} \ge \beta \|q\|_{L^2(\Omega)}\quad \forall q\in \mY_h^{\rm PS},
\]
for some $\beta>0$ independent of the mesh parameter $h$.
\end{theorem}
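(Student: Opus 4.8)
The proof naturally splits into two parts: the inclusion $\Div \mbV_h^{\rm PS}\subseteq \mY_h^{\rm PS}$, and the inf-sup inequality itself. For the inclusion, one first observes that $\Div \bv \in \pol_0(\PST)$ trivially for $\bv \in \bpol_1^c(\PST)$, and that $\int_\Omega \Div \bv = 0$ by the divergence theorem since $\bv$ has vanishing trace; hence $\Div \bv \in \pol_0(\PST)\cap L^2_0(\Omega)$. The weak-continuity constraints $\theta_z(\Div\bv)=0$ for all $z\in\calS_h$ are exactly the content of Lemma~\ref{lem:WeakContinuity}. Combining, $\Div\bv$ lies in $\{q\in\pol_0(\PST):\theta_z(q)=0\ \forall z\in\calS_h\}\cap L^2_0(\Omega) = \hY_h^{\rm PS}\cap L^2_0(\Omega) = \mY_h^{\rm PS}$, which gives the inclusion.

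For the inf-sup bound, the strategy I would take is to \emph{reduce to the full unstable pair and then correct}. It is classical (e.g., by Fortin's trick together with the surjectivity of $\Div:\bH^1_0(\Omega)\to L^2_0(\Omega)$) that the linear Lagrange velocity space on a \emph{once-refined} mesh like $\PST$ is inf-sup stable against the \emph{full} piecewise-constant pressure space $\mathring\pol_0(\PST)$; indeed this is the content of the analysis cited from \cite{BoffiGuzmanNeilan20}, and it can also be seen by a macroelement/Boland-Nicolaides argument on the Powell-Sabin macroelements. So given $q\in\mY_h^{\rm PS}\subseteq \mathring\pol_0(\PST)$, there is $\bv\in\mbV_h^{\rm PS}$ (the \emph{full} linear Lagrange space with boundary conditions) with $\int_\Omega(\Div\bv)q \ge \beta_0\|q\|_{L^2(\Omega)}^2$ and $\|\nabla\bv\|_{L^2(\Omega)}\le \|q\|_{L^2(\Omega)}$. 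Since the test velocity space here is the \emph{same} full space $\mbV_h^{\rm PS}$, no further correction of $\bv$ is needed — the only change from the classical setting is that the pressure $q$ is now constrained to the subspace $\mY_h^{\rm PS}$, which only shrinks the set over which the supremum must beat $\|q\|$. Therefore the inequality is inherited directly, with the same constant $\beta=\beta_0$.

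The one point that requires genuine care — and which I expect to be the main obstacle — is establishing the stability of the \emph{full} linear-constant pair $\mbV_h^{\rm PS}\times\mathring\pol_0(\PST)$ on Powell-Sabin splits with an $h$-independent constant. This is not one of the textbook-stable pairs on an unrefined mesh (where $P_1$-$P_0$ famously fails), so the argument must exploit the Powell-Sabin geometry: each macro-triangle $T\in\mct$ is split into six subtriangles sharing the incenter, and the interior vertices introduced (the incenter and the edge points) carry enough velocity degrees of freedom to control the jumps of a piecewise-constant pressure. The cleanest route is a macroelement argument: verify a local inf-sup condition on each Powell-Sabin macroelement (a patch around an original edge or vertex), check the standard patch-covering and connectivity hypotheses of the macroelement theorem, and conclude the global bound. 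I would either cite \cite{BoffiGuzmanNeilan20} for this directly, or reprove it via the discrete de Rham complex on $\PST$ from \cite{GuzmanLischkeNeilan}, which gives exactness and hence surjectivity of $\Div$ onto the constrained space $\mY_h^{\rm PS}$; combined with a stable right inverse (bounded by inverse estimates on the shape-regular split), exactness plus boundedness yields the inf-sup constant. Everything else — the divergence theorem computation, the mean-zero bookkeeping, invoking Lemma~\ref{lem:WeakContinuity} — is routine.
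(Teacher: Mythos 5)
Your treatment of the inclusion $\Div \mbV_h^{\rm PS}\subseteq \mY_h^{\rm PS}$ is fine (divergence theorem for the mean-zero condition, Lemma~\ref{lem:WeakContinuity} for the constraints). The inf-sup argument, however, rests on a false premise. You claim it is classical that the full pair $\mbV_h^{\rm PS}\times \mathring{\pol}_0(\PST)$ is inf-sup stable and then observe that restricting $q$ to the subspace $\mY_h^{\rm PS}$ only helps. But that full pair is \emph{not} stable; its inf-sup constant is exactly zero. Indeed, by the very inclusion you proved in your first paragraph, $\Div \mbV_h^{\rm PS}$ is contained in $\mY_h^{\rm PS}$, which is a \emph{proper} subspace of $\mathring{\pol}_0(\PST)$: every edge of $\mct$ produces a singular split point, and each such point imposes an independent constraint $\theta_z(q)=0$. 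Hence any nonzero $q\in\mathring{\pol}_0(\PST)$ that is $L^2$-orthogonal to $\Div\mbV_h^{\rm PS}$ --- for instance the alternating-sign checkerboard mode supported on the four triangles around an interior singular vertex --- satisfies $\int_\Omega(\Div\bv)q=0$ for every $\bv\in\mbV_h^{\rm PS}$. This is precisely why the pressure space must be constrained, and why the paper itself labels $\mbV_h^{\rm PS}\times\pol_0(\PST)$ ``unstable.'' The macroelement argument you propose for the full pair would fail at the local level for the same reason: the local kernel contains these checkerboard modes in addition to constants. (You even call it ``the full unstable pair'' in your opening sentence, which is at odds with the claim that follows.)

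The correct route is the one you mention only as a fallback in your final sentences: surjectivity of $\Div$ from $\mbV_h^{\rm PS}$ onto the \emph{constrained} space $\mY_h^{\rm PS}$ together with an $H^1$-bounded right inverse. This is exactly what the paper does: for $q\in\mY_h^{\rm PS}$ it takes $\bw\in\bH^1_0(\Omega)$ with $\Div\bw=q$ and $\|\bw\|_{H^1(\Omega)}\le C\|q\|_{L^2(\Omega)}$, and then invokes the Fortin-type operator $\bPi_h$ of \cite[Theorem~3.4]{BoffiGuzmanNeilan20}, which satisfies $\Div\bPi_h\bw=\Div\bw=q$ together with a local $L^2$ approximation bound, whence $\|\nab\bPi_h\bw\|_{L^2(\Omega)}\le C\|q\|_{L^2(\Omega)}$. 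Taking $\bv=\bPi_h\bw$ gives the inf-sup inequality at once. The divergence-preserving, $H^1$-stable interpolant onto $\bV_h^{\rm PS}$ (or an equivalent exactness-plus-boundedness argument for the constrained pair) is the key ingredient missing from your write-up; without it the proof does not close.
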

\begin{proof}
For fixed $q\in \mY_h^{\rm PS}$, let $\bw\in \bH^1_0(\Omega)$ satisfy $\Div \bw = q$ and $\|\bw\|_{H^1(\Omega)}\le C \|q\|_{L^2(\Omega)}$.
\cite[Theorem 3.4]{BoffiGuzmanNeilan20}
gives the existence of an operator $\bPi_h$ such that $\bPi_h \bw\in \bV_h^{\rm PS}$, $\Div \bPi_h \bw = \Div \bw = q$, and
\[
\|\bw-\bPi_h \bw\|_{L^2(T)}\le C h_T \big[\|\bw\|_{H^1(\omega(T))} + \|\Div \bw\|_{L^2(T)}\big]\qquad \forall T\in \calT_h,
\]
where $\omega(T) = \mathop{\cup_{T'\in \calT_h}}_{\bar T'\cap \bar T \neq \emptyset} T'$.
Standard arguments then show $\|\nab \bPi_h \bw\|_{L^2(\Omega)}\le C\big(\|\bw\|_{H^1(\Omega)}+ \|\Div \bw\|_{L^2(\Omega)}\big)\le C\|q\|_{L^2(\Omega)}$.
Setting $\bv = \bPi_h \bw$, we have
\begin{align*}
\int_\Omega (\Div \bv) q\, dx = \int_\Omega (\Div \bPi_h \bw) q\, dx = \|q\|_{L^2(\Omega)}^2 \ge C^{-1} \|q\|_{L^2(\Omega)} \|\nab \bv\|_{L^2(\Omega)}.
\end{align*}
This yields the inf-sup stability result with $\beta = C^{-1}$.
\end{proof}


\section{Inf-sup stability on  Worsey-Farin splits}\label{WF}

Let $\mct$ be a simplicial triangulation of a polyhedral domain $\Omega\subset \bbR^3$.
The Worsey-Farin triangulation $\WFT$ is obtained by splitting each tetrahedron into twelve
sub-tetrahedra by the following procedure.  Similar to the Powell--Sabin case, for each $T\in \mct$,
we connect the incenter of $T$ to its vertices.  Next, the incenters of neighboring
pairs of tetrahedra are connected with an edge.  This creates a face split point (a vertex)
on each face of $T$.  If $T$ has a boundary face, then we connect the incenter of $T$
to the barycenter of the face by a line.  Finally, the face split points are connected to the vertices
of the face. For each $T\in \calT_h$, we denote by $T^{\rm WF}$ the triangulation resulting 
from local Worsey-Farin refinement of $T$, i.e.,
\[
T^{\rm WF} = \{K\in \WFT:\ K\subset \bar T\}.
\]

\begin{definition}
An edge in a 3D simplicial triangulation is called {\em singular}
if the faces meeting at the edge fall on exactly two planes.
\end{definition}
By construction, the Worsey-Farin triangulation
contains many singular edges; for each
face in the unrefined triangulation $\mct$,
there are three associated singular edges in $\WFT$.

Let $\mathcal{E}^{\mathcal{S}}_h$ denote the set of singular edges
in $\WFT$, and let $\calE^{\calS,I}_h$ and $\calE_h^{\calS,B}$ denote
the sets of interior and boundary singular edges, respectively.
For each $e\in \calE^{\calS}_h$, let 
$\calT_e = \{K_e^{(1)},\ldots,K_e^{(n_e)}\}$ denote
the set of tetrahedra that have $e$ as an edge.
Here, $n_e=4$ if $e$ is an interior edge,
and $n_e = 2$ if $e$ is a boundary edge.
We assume the tetrahedra are labeled such that
$K_e^{(j)}$ and $K_e^{(j+1)}$ share a common face.

\subsection{Finite Element Spaces on Worsey--Farin Triangulations}
Similar to the two-dimensional case, the 
divergence operator acting on the Lagrange finite element
space has weak continuity properties on singular edges (cf.~\cite{GLN20}).
%
\begin{lemma}
For $e\in \calE^{\calS}_h$, and a piecewise smooth function $q$, define
\[
\theta_e(q) = \left\{
\begin{array}{ll}
q_e^{(1)}|_e - q_e^{(2)}|_e +q_e^{(3)}|_e -q_e^{(4)}|_e  & e\in \calE^{\calS,I}_h,\\
q_e^{(1)}|_e - q_e^{(2)}|_e & e\in \calE^{\calS,B}_h,
\end{array}
\right.
\]
where $q_e^{(j)} = q|_{K_e^{(j)}}$.  Then
there holds $\theta_e(\Div \bv)=0$ for all $\bv\in \mathring{\bpol}_k^c(\WFT)$.
\end{lemma}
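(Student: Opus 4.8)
The plan is to deduce this three-dimensional statement from its two-dimensional counterpart, Lemma~\ref{lem:WeakContinuity}, by slicing transversally to the singular edge $e$.

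Since $\theta_e(\Div\bv)$ is a polynomial function on $e$, it suffices to prove $\theta_e(\Div\bv)(p)=0$ for every point $p$ in the relative interior of $e$. I would fix such a $p$, choose Cartesian coordinates $(x,y,z)$ with $e$ lying on the $z$-axis and $p=(0,0,z_0)$, and set $\Sigma=\{z=z_0\}$. By the definition of a singular edge, the faces of $\WFT$ containing $e$ lie on exactly two planes; both of these contain the $z$-axis, so they meet $\Sigma$ in two distinct lines through $p$. Hence the tetrahedra in $\calT_e$ intersect $\Sigma$ in a fan of convex polygons around $p$ whose edges at $p$ lie on these two lines, with the faces $K_e^{(j)}\cap K_e^{(j+1)}$ — and, when $e\in\calE^{\calS,B}_h$, the two boundary faces at $e$ — tracing out those edges. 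In other words, $p$ is locally a singular vertex of the planar configuration induced on $\Sigma$, of interior type if $e\in\calE^{\calS,I}_h$ and of boundary type if $e\in\calE^{\calS,B}_h$.

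Next I would peel off the derivative along $e$. Writing $\Div\bv=\p_z v_3+(\p_x v_1+\p_y v_2)$, the key observation is that the $z$-direction is tangent to every face of $\WFT$ containing $e$, so continuity of $\bv$ forces $\p_z v_3$ to be continuous across each such face; its values on $e$ then agree over $\calT_e$, giving $\theta_e(\p_z v_3)=0$. Therefore $\theta_e(\Div\bv)(p)=\theta_p(\p_x v_1+\p_y v_2)$, where $\theta_p$ is the planar alternating functional at the singular vertex $p$ and $\p_x v_1+\p_y v_2$ restricted to $\Sigma$ is precisely the divergence of the continuous, piecewise-smooth planar field $(v_1,v_2)|_\Sigma$; moreover, when $e$ is a boundary singular edge, $\bv|_{\p\Omega}=0$ gives $(v_1,v_2)|_\Sigma=0$ on the two boundary edges at $p$. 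Lemma~\ref{lem:WeakContinuity} — whose proof uses only continuity of the field across the edges meeting the singular vertex, together with the vanishing trace on boundary edges in the boundary case — then yields $\theta_p(\p_x v_1+\p_y v_2)=0$, so $\theta_e(\Div\bv)(p)=0$; as $p$ was arbitrary, $\theta_e(\Div\bv)=0$ on $e$.

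The coordinate change and the remark on $\p_z$-derivatives are routine, and for interior singular edges the argument only uses continuity of $\bv$. The part I expect to require the most care — and the only place where the full hypothesis $\bv\in\mathring{\bpol}_k^c(\WFT)$ enters — is the boundary case: one must check that the two boundary faces at a boundary singular edge are coplanar (so their traces on $\Sigma$ are collinear) and that $\bv|_{\p\Omega}=0$ descends to the vanishing of $(v_1,v_2)|_\Sigma$ along those traces, so that the boundary instance of Lemma~\ref{lem:WeakContinuity} genuinely applies. If one prefers to avoid slicing, an alternative is to argue directly in the spirit of \cite{ScottVogelius85}: rewrite $\theta_e(\Div\bv)$ on $e$ as a combination of the jumps of $\Div\bv$ across the two coplanar faces at $e$, reduce each jump to a jump of a normal derivative of $\bv$ via continuity of tangential derivatives, and use that the relevant scalar component of $\bv$ is continuous across the common plane.
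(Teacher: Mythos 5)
Your argument is correct, and it is worth noting that the paper does not actually prove this lemma: it is stated with a pointer to \cite{GLN20} (Guzm\'an--Lischke--Neilan), just as the planar Lemma \ref{lem:WeakContinuity} is deferred to \cite{ScottVogelius85,guzman2019scott}. Your slicing argument is therefore a genuine, self-contained alternative to citation, and its three key steps all check out: (i) since $e$ lies along the $z$-axis and every face of $\WFT$ containing $e$ has $\p_z$ as a tangential direction, continuity of $v_3$ across those faces makes $\p_z v_3$ single-valued on $e$ over all of $\calT_e$, so its alternating sum vanishes (note this step needs no singularity of $e$ at all); (ii) the remaining piece $\p_x v_1+\p_y v_2$ restricted to the transversal slice $\Sigma$ is exactly the planar divergence of the continuous piecewise field $(v_1,v_2)|_\Sigma$, and the two planes carrying the faces at $e$ cut $\Sigma$ in two lines through $p$, so $p$ is a planar singular vertex of the induced fan; (iii) in the Worsey--Farin construction a boundary singular edge is an edge from a face split point to a vertex of a boundary face, so its two boundary faces are the two subtriangles of that face adjacent to $e$ and are indeed coplanar, and $\bv|_{\p\Omega}=0$ descends to $(v_1,v_2)|_\Sigma=0$ on their traces. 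The only informality is that you invoke Lemma \ref{lem:WeakContinuity} for a local fan that is not literally a Powell--Sabin triangulation; as you say, the Scott--Vogelius proof of that lemma is purely local (it only uses tangential continuity across the edges at the vertex, plus the vanishing trace in the boundary case), so what you are really citing is the local singular-vertex identity rather than the lemma as stated --- either phrase it that way or fold in the one-paragraph local 2D argument directly. Compared with the reference's route (degrees of freedom and a direct jump computation on the two coplanar pairs of faces, essentially your closing alternative), the dimension-reduction buys a cleaner separation of the ``easy'' tangential derivative $\p_z v_3$ from the genuinely two-dimensional content, at the cost of a coordinate change and the coplanarity check.
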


Analogous to the Powell--Sabin case, we define the finite element spaces to discretize the Stokes problem on Worsey-Farin splits. We first define the spaces without boundary conditions
\begin{align*}
\bV_h^{\rm WF}& = \bpol_1^c(\WFT),\\
Y_h^{\rm WF}& = \{q\in \mathring{\pol}_0(\WFT):\ \theta_e(q)=0\ \forall e\in \calE^{\calS,I}_h\}.
\end{align*}
Then, we define an intermediate pressure space
\begin{align*}
\hY_h^{\rm WF}& = \{q\in \mathring{\pol}_0(\WFT):\ \theta_e(q)=0\ \forall e\in \calE^{\calS}_h\}.
\end{align*}
We now define the spaces with boundary conditions
\begin{align*}
\mbV_h^{\rm WF}& =\bV_h^{\rm WF} \cap \bH_0^1(\Omega),\\
\mY_h^{\rm WF}& = \hY_h^{\rm WF} \cap  L_0^2(\Omega).
\end{align*}

\subsection{Stability of $\mbV_h^{\rm WF}\times \mY_h^{\rm WF}$}
In this section, we show  the pair $\mbV_h^{\rm WF}\times \mY_h^{\rm WF}$ is inf-sup stable.
First we introduce some notation.

Let $T\in \mct$, and let $T^{\rm A}$ denote the local triangulation of $T$
consisting of four tetrahedra, obtained by connecting the vertices of $T$
with its incenter, i.e., $T^{\rm A}$ denotes the Alfeld split of $T$.
For a face $F\subset T$, denote by $F^{\rm CT}$ 
the set of three triangles formed from $F$ by the Worsey-Farin refinement, i.e.,
$F^{\rm CT}$ is the Clough-Tocher refinement of $F$.
We denote by $\Delta_1^I(F^{\rm CT})$ the set of three interior edges in $F^{\rm CT}$,
and let $e_F\in \Delta_1^I(F^{\rm CT})$ denote an arbitrary, fixed interior edge of $F^{\rm CT}$.

\begin{definition}
Consider the triangulation $F^{CT}$
of a face $F\in \Delta_2(T)$, and
let the three triangles of $F^{\rm CT}$ be labeled $Q_1,Q_2,Q_3$.
Let $e = \p Q_1\cap \p Q_2$ be an internal edge, let $\bt$ be the unit vector
tangent to $e$ pointing away from the split point $m_F$, and let $\bs$ be the unit vector 
orthogonal to $\bt$ such that $\bs\times \bn_F = \bt$.
Then the jump of a piecewise smooth function $p$ across $e$ is defined as
\[
\jump{p}_e = \big(p|_{Q_1} - p|_{Q_2}\big)\bs.
\]
\end{definition}

We now state the degrees of freedom
for the spaces  $\mbV_h^{\rm WF}$ and  $\mY_h^{\rm WF}$.
The proofs of the following two lemmas are given in \cite[Lemmas 5.11--5.12]{GLN20}.
\begin{lemma}
\label{lem:VelocityDOFs}
A function $\bv\in \bV_h^{\rm WF}$ is uniquely determined by the values 
\begin{alignat*}{2}
&\bv(a)\qquad &&\forall a\in \Delta_0(T),\\
&\int_F (\bv\cdot \bn_F)\qquad &&\forall F\in \Delta_2(T),\\
&\int_e \jump{\Div \bv}_e 
\qquad &&\forall e\in \Delta_1^I(F^{\rm CT})\backslash \{e_F\},\ \forall F\in \Delta_2(T),\\
&\int_T (\Div \bv) p\qquad &&\forall p\in \mathring{\mathcal{V}}_0(T):=\pol_0(T^{\rm A})\cap L^2_0(T).
\end{alignat*}
for each $T\in \calT_h$.
\end{lemma}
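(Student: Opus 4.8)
The plan is to prove Lemma \ref{lem:VelocityDOFs} by a dimension count combined with an injectivity (unisolvence) argument, carried out locally on a single tetrahedron $T$ with its Worsey-Farin refinement $T^{\rm WF}$. First I would record that $\bV_h^{\rm WF}$ restricted to $T$ is simply $\bpol_1^c(T^{\rm WF})$, the space of continuous piecewise-linear vector fields on the twelve sub-tetrahedra; its dimension equals $d$ times the number of vertices of $T^{\rm WF}$, namely the $4$ original vertices, the incenter, and the $4$ face split points, giving $9$ vertices and hence $\dim \bpol_1^c(T^{\rm WF})|_T = 27$. Then I would count the proposed functionals: $3\cdot 4 = 12$ vertex values, $4$ face-flux functionals, $(3-1)\cdot 4 = 8$ interior-edge jump functionals, and $\dim \mathring{\mathcal V}_0(T) = 4 - 1 = 3$ volume functionals, for a total of $12+4+8+3 = 27$. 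With the counts matching, it suffices to show that a $\bv\in \bpol_1^c(T^{\rm WF})$ annihilated by all the listed functionals is identically zero.

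The key steps for the injectivity argument proceed from the boundary inward. Suppose all the functionals vanish on $\bv$. Since all vertex values of $\bv$ on the $4$ original vertices vanish, and $\bv$ is piecewise linear, $\bv$ vanishes at every original vertex; combined with vanishing on the face split points — which is \emph{not} assumed directly and must be deduced — one wants to conclude $\bv\equiv 0$. The route is to use the structure of $\Div$ on each Clough-Tocher-split face together with the weak-continuity/jump degrees of freedom: the functionals $\int_e \jump{\Div \bv}_e$ over $e\in\Delta_1^I(F^{\rm CT})\setminus\{e_F\}$, together with the global relation that $\sum$ of the three jumps around the split point $m_F$ is controlled (this is the ``missing'' edge $e_F$, whose jump is determined by the other two via Lemma \ref{lem:WeakContinuity}-type identities on the face), pin down the tangential-normal derivative data of $\bv$ on each face. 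Then the face-flux functionals $\int_F(\bv\cdot\bn_F)$ fix the remaining normal-component degree of freedom on each original face, so that $\bv$ vanishes on $\partial T$ in an appropriate sense including at the face split points. Finally, once $\bv$ is known on all of $\partial T^{\rm WF}$ (all $8$ boundary vertices), the only remaining free vertex is the incenter, i.e.\ $d=3$ remaining scalar degrees of freedom, which are exactly killed by the $3$ volume functionals $\int_T(\Div\bv)p$, $p\in\mathring{\mathcal V}_0(T)$ — here one checks that the map from the incenter value of $\bv$ to the $3$-dimensional vector $\big(\int_T(\Div\bv)p\big)_{p}$ is an isomorphism, which follows because moving the incenter value produces a nonzero, mean-zero, $\pol_0(T^{\rm A})$ divergence (the Alfeld split $T^{\rm A}$ being the coarsening of $T^{\rm WF}$ relevant to the incenter star).

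The main obstacle I anticipate is the middle step: correctly showing that the interior-edge jump functionals on the Clough-Tocher faces, with one edge per face omitted, together with the nodal and flux data, force $\bv$ to vanish at the four face split points. This requires a careful local analysis on a single split face $F^{\rm CT}$: one must understand exactly which combinations of the planar derivatives of the linear pieces of $\bv\cdot\bn_F$ and of the tangential components are detected by $\jump{\Div \bv}_e$, and invoke the identity that the three edge-jumps around $m_F$ satisfy one linear dependence (so two of them determine the third), which is precisely why only $3-1$ functionals per face appear. I would handle this by reducing to the two-dimensional Clough-Tocher situation, citing the corresponding dimension/unisolvence facts for $\bpol_1^c$ on a Clough-Tocher split already available in \cite{GLN20}, and then assembling the three faces plus the interior consistently. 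The remaining steps — the two dimension counts and the final $3\times 3$ incenter/volume nondegeneracy — are routine linear algebra and are best treated briefly, with the bulk of the writing devoted to the face analysis and a pointer to \cite[Lemmas 5.11--5.12]{GLN20} for the full details.
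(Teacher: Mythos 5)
The paper does not actually prove this lemma: it states that the proofs of Lemmas \ref{lem:VelocityDOFs} and \ref{lem:PressureDOFs} are given in \cite[Lemmas 5.11--5.12]{GLN20}. So there is no in-paper argument to compare against, and your proposal must be judged on its own. Your framework (local dimension count plus unisolvence) is the standard one and your count is correct: $\dim\bpol_1^c(T^{\rm WF})=3\cdot 9=27$ (four vertices, four face split points, the incenter) and $12+4+8+3=27$ functionals, with $\dim\mathring{\mathcal{V}}_0(T)=4-1=3$.

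The genuine gap is in the injectivity step, and it is not only the face-split-point issue you flag. Your ``outside-in'' elimination assumes the system block-triangularizes: face data (flux plus two jumps) determines $\bv(m_F)$, and then the three volume functionals determine $\bv(z_T)$, where $z_T$ is the incenter. But the jump functionals do not live on the face alone. The edge $e\in\Delta_1^I(F^{\rm CT})$ is shared by two sub-tetrahedra $K_1=\mathrm{conv}(Q_1,z_T)$ and $K_2=\mathrm{conv}(Q_2,z_T)$, and $\jump{\Div\bv}_e$ is the jump of the elementwise-constant divergence across the interior face $\mathrm{conv}(e,z_T)$. For a continuous piecewise linear field this jump depends on the values of $\bv$ at \emph{all} vertices of $K_1\cup K_2$, including $z_T$; with the corner values set to zero it is a linear function of both $\bv(m_F)$ and $\bv(z_T)$. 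So the $8$ tangential unknowns at the split points and the $3$ unknowns at the incenter are coupled through the $8$ jump functionals and the $3$ volume functionals, and the claimed sequential elimination does not go through as written. (The flux part is fine: with vanishing corner values, $\int_F\bv\cdot\bn_F$ is a positive multiple of $\bv(m_F)\cdot\bn_F$, so the normal components at the split points do vanish.)

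A route more consistent with how the lemma is actually used in Section \ref{WF} is: from the flux functionals deduce $\int_T\Div\bv=\int_{\p T}\bv\cdot\bn=0$, so that the volume functionals give $\int_T(\Div\bv)p=0$ for all $p\in\pol_0(T^{\rm A})$, not just mean-zero $p$; together with the vanishing jump functionals and the local version of Lemma \ref{lem:PressureDOFs} this forces $\Div\bv\equiv 0$ on $T$. One is then left with showing that a divergence-free $\bv\in\bpol_1^c(T^{\rm WF})$ with zero vertex values and zero face fluxes vanishes, which is the real content and is what \cite{GLN20} establishes via the local exact sequence on Worsey--Farin splits. Since you ultimately defer the hard step to \cite{GLN20} --- exactly as the paper does for the entire lemma --- your write-up is an acceptable pointer but not a self-contained proof; if you want it self-contained, the divergence-free reduction above is the step to flesh out.
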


\begin{lemma}\label{lem:PressureDOFs}
A function $q\in Y_h^{\rm WF}$ is uniquely determined
by 
\begin{alignat*}{2}
&\int_e \jump{q}_e \qquad &&\forall e\in \Delta_1^I(F^{\rm CT})\backslash \{e_F\},\ \forall F\in \Delta_2(T),\\
%
%
&\int_T q p \qquad && \forall p\in \pol_0(T^{\rm A}).
\end{alignat*}
for all $T\in \calT_h$. 
\end{lemma}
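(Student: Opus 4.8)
The plan is to follow the standard unisolvence strategy: first count the degrees of freedom and compare with $\dim Y_h^{\rm WF}$ restricted to a single tetrahedron $T$ (with the constraints $\theta_e(q)=0$ only at the interior singular edges of $T^{\rm WF}$), then show that a function $q$ with all listed functionals equal to zero must vanish. Since $Y_h^{\rm WF}$ is a subspace of $\mathring{\pol}_0(\WFT)$, locally on $T$ the space $\pol_0(T^{\rm WF})$ has dimension $12$ (one constant per sub-tetrahedron). The constraints $\theta_e(q)=0$ for the three interior singular edges $e\in \Delta_1^I(F^{\rm CT})$ on each of the four faces $F$ give $12$ linear conditions, but these are not all independent; by the structure of the Worsey-Farin split one expects the local dimension of the constrained space to be $8$. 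The proposed degrees of freedom are $\int_e \jump{q}_e$ for $e\in \Delta_1^I(F^{\rm CT})\setminus\{e_F\}$ over the four faces (giving $4\times 2 = 8$ scalar values, since $\jump{q}_e$ is a scalar multiple of the fixed vector $\bs$) together with $\int_T q\, p$ for $p\in\pol_0(T^{\rm A})$, which contributes the dimension of $\pol_0(T^{\rm A})=4$. This overshoots, so the key point is that on each face $F$ one of the three interior jumps is redundant given the other two and the "Alfeld-average" data — this is exactly why $e_F$ is excluded — and one must verify the count reduces correctly to $8$.

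The main work is the injectivity argument. Suppose $q\in Y_h^{\rm WF}$ has $\int_e\jump{q}_e = 0$ for all $e\in\Delta_1^I(F^{\rm CT})\setminus\{e_F\}$ and all $F\in\Delta_2(T)$, and $\int_T q\,p = 0$ for all $p\in\pol_0(T^{\rm A})$. First I would use the face data: since $q$ is piecewise constant, each jump $\jump{q}_e$ is itself a constant vector, so $\int_e\jump{q}_e = 0$ forces $\jump{q}_e = 0$, i.e., $q$ is continuous across those two interior edges of each face's Clough-Tocher split. Combined with the singular-edge constraint $\theta_{e_F}(q)=0$ at the remaining edge $e_F$ (which holds because $q\in Y_h^{\rm WF}$), one deduces that $q$ is in fact single-valued on all three sub-triangles touching the split point $m_F$ — hence $q$ restricted to the three sub-tetrahedra of $T^{\rm WF}$ sitting over the three sub-triangles of $F^{\rm CT}$... more precisely, the jump conditions let you propagate equality of the constant values between sub-tetrahedra that share an interior face lying over $F$. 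Doing this for all four faces $F$ of $T$ shows that the $12$ constants of $q|_T$ collapse onto the $4$ constants of a function in $\pol_0(T^{\rm A})$ — that is, $q|_T$ agrees with an element of $\pol_0(T^{\rm A})$. Then the volume data $\int_T q\,p = 0$ for all $p\in\pol_0(T^{\rm A})$ (taking $p=q$) forces $q|_T = 0$. Since $T$ was arbitrary, $q\equiv 0$.

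I expect the main obstacle to be the bookkeeping in the propagation step: one must carefully track, for a fixed face $F$, which of the twelve sub-tetrahedra of $T^{\rm WF}$ lie on which side of each interior edge of $F^{\rm CT}$, and verify that the four "vanishing jump / singular edge" relations per face are exactly enough to force $q|_T$ into the $4$-dimensional Alfeld-split subspace — no more and no less. This is where the specific geometry of the Worsey-Farin refinement (each face carries a Clough-Tocher split whose split point is the face split point created by joining incenters) enters essentially, and where the choice of the distinguished edge $e_F$ must be shown to be harmless. Once that reduction is in place, the rest is immediate, and the dimension count $8+4=12 = \dim\pol_0(T^{\rm WF})$... wait — here one must be careful that the correct comparison is with $\dim Y_h^{\rm WF}|_T$, not $\dim\pol_0(T^{\rm WF})$; the relations among the listed functionals (the redundancy that motivated dropping $e_F$) must be accounted for so that the number of functionals equals $\dim Y_h^{\rm WF}|_T$. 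Verifying this equality precisely, rather than just the surjectivity/injectivity dichotomy, is the delicate part, but it is carried out in \cite[Lemmas 5.11--5.12]{GLN20}, which we invoke.
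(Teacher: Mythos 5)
First, be aware that the paper itself offers no proof of this lemma: immediately before the statement it declares that the proofs of Lemmas \ref{lem:VelocityDOFs} and \ref{lem:PressureDOFs} are given in \cite[Lemmas 5.11--5.12]{GLN20}. So your closing appeal to that reference is exactly what the authors do, and the only thing to assess is your sketch of a direct argument.

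The injectivity half of your sketch --- the only part the paper actually uses (to conclude $\Div \bv = q$ in the inf-sup proof) --- is essentially correct: for each face $F$ of $T$, vanishing of the two jumps $\int_e \jump{q}_e$ over $e\in \Delta_1^I(F^{\rm CT})\setminus\{e_F\}$ forces the constants on the three sub-tetrahedra of $T^{\rm WF}$ touching $F$ to coincide, because the three interior edges of $F^{\rm CT}$ form a cycle around $m_F$ and the three jumps sum to zero; running over the four faces collapses $q|_T$ into $\pol_0(T^{\rm A})$, and the moment conditions (take $p=q|_T$) finish. Two points in your writeup are wrong or misleading, however. (i) The constraint $\theta_{e_F}(q)=0$ plays no role in this propagation and should not be invoked: for an interior singular edge it couples the two tetrahedra of $T$ at $e_F$ with the two tetrahedra of the neighboring macro-element, so by itself it says nothing about continuity within $T$; and for a boundary face $F$ the edge $e_F$ is a \emph{boundary} singular edge, at which $Y_h^{\rm WF}$ imposes no constraint at all, so an argument that genuinely needed $\theta_{e_F}(q)=0$ would break on boundary faces. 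The two vanishing jumps per face already do everything. (ii) Your dimension bookkeeping is off. Restricted to a single $T$, the constraints defining $Y_h^{\rm WF}$ impose nothing on $q|_T$ (each can be absorbed by the neighboring macro-element), so the relevant local space is the full $\pol_0(T^{\rm WF})$ of dimension $12=8+4$; there is no ``local dimension $8$'' and no overshoot. The redundancy that motivates dropping $e_F$ is simply the cyclic relation among the three jumps around $m_F$, and the redundancy that reconciles the global count of $12|\calT_h|$ listed functionals with $\dim Y_h^{\rm WF}=12|\calT_h|-2|\calF^I_h|$ is that $\theta_e(q)=0$ makes the jump seen from $T$ across an interior face equal to the jump seen from its neighbor. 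Since the lemma only claims unique determination (injectivity), none of this counting is needed for the statement as used.
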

If we restrict ourselves to $\hY_h^{\rm WF}$ then we only take interior faces $F$ in the first set of degrees of freedom.

\begin{proposition}\label{prop:WFScaling}
Let $\bv\in \bV_h^{\rm WF}$ and $T\in \calT_h$. For $m=0, 1$, 
there holds
\begin{align*}
|\bv|_{H^m(T)}
&\le C h_T^{-1-2m}\Big(h_T^4 \sum_{ a\in \Delta_0( T)}|\bv(a)|^2
+ \sum_{ F\in \Delta_2( T)} \Big|\int_{ F}  \bv \cdot  \bn_{ F}\Big|^2
%
 +  h_T^3\|\Div \bv\|_{L^2(T)}^2\Big).
\end{align*}
\end{proposition}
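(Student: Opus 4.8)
The plan is to prove this scaling estimate by exploiting the degrees of freedom in Lemma~\ref{lem:VelocityDOFs}, which furnish a unisolvent set for $\bV_h^{\rm WF}$ restricted to a single macro-element $T^{\rm WF}$. First I would reduce to the reference configuration: fix a shape-regular reference tetrahedron $\hat T$ and its Worsey-Farin split $\hat T^{\rm WF}$, and use an affine map $F_T:\hat T\to T$ with $\|DF_T\|\lesssim h_T$ and $\|DF_T^{-1}\|\lesssim h_T^{-1}$ (shape-regularity). Under pullback, $\bv\mapsto \hat\bv := \bv\circ F_T$ maps $\bV_h^{\rm WF}|_T$ into the fixed finite-dimensional space $\bV_h^{\rm WF}|_{\hat T^{\rm WF}}$, and the Sobolev seminorms transform by the standard rule $|\bv|_{H^m(T)}\sim h_T^{d/2-m}|\hat\bv|_{H^m(\hat T)}$ with $d=3$, i.e. $|\bv|_{H^m(T)}\sim h_T^{3/2-m}|\hat\bv|_{H^m(\hat T)}$.

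Next, on the reference macro-element the space is finite-dimensional, so all norms are equivalent; in particular $|\hat\bv|_{H^m(\hat T)}^2$ is bounded by a constant times the sum of squares of the degree-of-freedom functionals applied to $\hat\bv$ (this is where unisolvency from Lemma~\ref{lem:VelocityDOFs} enters — the DOFs form a basis for the dual space, so the map from DOF values to $\hat\bv$ is a bounded linear isomorphism). Thus
\begin{align*}
|\hat\bv|_{H^m(\hat T)}^2
&\le C\Big(\sum_{a}|\hat\bv(\hat a)|^2
+ \sum_{F}\Big|\int_{\hat F}\hat\bv\cdot\bn_{\hat F}\Big|^2
+ \sum_{e}\Big|\int_{\hat e}\jump{\Div\hat\bv}_{\hat e}\Big|^2
+ \sup_{\hat p}\frac{|\int_{\hat T}(\Div\hat\bv)\hat p|^2}{\|\hat p\|_{L^2(\hat T)}^2}\Big).
\end{align*}
The third and fourth groups — the ones involving the divergence — need care: I would bound them crudely by $\|\Div\hat\bv\|_{L^2(\hat T)}^2\le \|\nab\hat\bv\|_{L^2(\hat T)}^2$ after using that the interior edges $\hat e$ and the jump functionals are fixed geometric data on $\hat T^{\rm WF}$, so $|\int_{\hat e}\jump{\Div\hat\bv}_{\hat e}|\lesssim \|\Div\hat\bv\|_{L^2(\hat T)}$ by a trace inequality on the (fixed) subsimplices, and likewise $|\int_{\hat T}(\Div\hat\bv)\hat p|\le \|\Div\hat\bv\|_{L^2(\hat T)}\|\hat p\|_{L^2(\hat T)}$ by Cauchy-Schwarz. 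Absorbing these into $|\hat\bv|_{H^1(\hat T)}$ would be circular for $m=1$, so instead I keep them as $\|\Div\hat\bv\|_{L^2(\hat T)}$ on the right, giving the clean bound
\begin{align*}
|\hat\bv|_{H^m(\hat T)}^2
&\le C\Big(\sum_{a}|\hat\bv(\hat a)|^2
+ \sum_{F}\Big|\int_{\hat F}\hat\bv\cdot\bn_{\hat F}\Big|^2
+ \|\Div\hat\bv\|_{L^2(\hat T)}^2\Big)
\end{align*}
for $m=0,1$; this is legitimate because the DOF-to-function isomorphism lets us express $\hat\bv$ in terms of the first two vertex/face groups together with \emph{enough} of the divergence data, and $\|\Div\hat\bv\|_{L^2(\hat T)}$ controls all the divergence DOFs simultaneously.

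Finally I would push this back to $T$ via the scaling rule. The vertex values are invariant, $\hat\bv(\hat a)=\bv(a)$; the face integrals scale as $\int_{\hat F}\hat\bv\cdot\bn_{\hat F}\sim h_T^{-2}\int_F\bv\cdot\bn_F$ (area factor $h_T^2$ from the Jacobian of the face map, combined with the direction of $\bn$); and $\|\Div\hat\bv\|_{L^2(\hat T)}^2\sim h_T^{-3}\cdot h_T^{2}\|\Div\bv\|_{L^2(T)}^2 = h_T^{-1}\|\Div\bv\|_{L^2(T)}^2$, using $|\hat w|_{L^2(\hat T)}^2\sim h_T^{-3}|w|_{L^2(T)}^2$ and the extra $h_T^2$ because $\widehat{\Div_x v}=h_T\,\Div_{\hat x}\hat\bv$ up to constants. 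Substituting $|\hat\bv|_{H^m(\hat T)}^2\sim h_T^{2m-3}|\bv|_{H^m(T)}^2$ on the left and multiplying through by $h_T^{3-2m}$ yields
\begin{align*}
|\bv|_{H^m(T)}^2
&\le C h_T^{3-2m}\Big(\sum_{a}|\bv(a)|^2
+ h_T^{-4}\sum_{F}\Big|\int_F\bv\cdot\bn_F\Big|^2
+ h_T^{-1}\|\Div\bv\|_{L^2(T)}^2\Big),
\end{align*}
which is exactly $h_T^{-2-4m}\big(h_T^4\sum_a|\bv(a)|^2 + \sum_F|\int_F\bv\cdot\bn_F|^2 + h_T^3\|\Div\bv\|_{L^2(T)}^2\big)$ after taking square roots (the stated exponent $-1-2m$ in the displayed estimate corresponds to taking the square root of $h_T^{-2-4m}$, i.e. the proposition is stated for $|\bv|_{H^m(T)}$ not its square — I would match the bookkeeping accordingly). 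The main obstacle is the divergence terms: one must be careful not to argue circularly when $m=1$ (where $\|\Div\bv\|_{L^2(T)}\le|\bv|_{H^1(T)}$), so the correct move is to retain $\|\Div\bv\|_{L^2(T)}$ as an independent quantity on the right-hand side throughout, justified by the fact that the unisolvent DOF set uses the divergence only through integrals that are each dominated by $\|\Div\bv\|_{L^2(T)}$, and to track the Jacobian power on $\Div$ (one extra factor of $h_T$ beyond the naive $L^2$ scaling) precisely.
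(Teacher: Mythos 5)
Your overall architecture matches the paper's: pull back to a reference tetrahedron, invoke equivalence of norms on the (finite-dimensional) reference macro-element via the unisolvent degrees of freedom of Lemma~\ref{lem:VelocityDOFs}, bound the divergence-type DOFs by $\|\Div\bv\|_{L^2}$ to avoid circularity at $m=1$, and scale back. The bookkeeping of powers of $h_T$ at the end is also consistent with the paper's (the displayed estimate is really for $|\bv|_{H^m(T)}^2$, as the proof and its later application make clear).

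However, there is a genuine gap in the choice of transformation. You use the plain pullback $\hat\bv=\bv\circ F_T$, whereas the paper uses the contravariant Piola transform $\bv(x)=A_T\hat\bv(\hat x)/\det(A_T)$, and this difference is not cosmetic. Two of the four DOF families are \emph{not} preserved by the plain pullback. First, the normal fluxes: for a general affine $F_T$ one has $\int_{\hat F}\hat\bv\cdot\hat\bn_{\hat F}=\frac{|\hat F|}{|F|}\,\hat\bn_{\hat F}\cdot\int_F\bv$, and since $\hat\bn_{\hat F}$ is not parallel to (the pullback of) $\bn_F$, this functional picks up the \emph{tangential} components of $\int_F\bv$, which do not appear on the right-hand side of the proposition; your claim that the face integrals ``scale as $h_T^{-2}\int_F\bv\cdot\bn_F$ combined with the direction of $\bn$'' papers over exactly this. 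Second, the divergence: under the plain pullback $\widehat{\Div}\,\hat\bv=\mathrm{tr}\big((\nabla_x\bv)A_T\big)$, which for a non-similarity $A_T$ is a linear combination of \emph{all} entries of $\nabla\bv$, not a multiple of $\Div\bv$. Hence $\|\widehat{\Div}\,\hat\bv\|_{L^2(\hat T)}$ is only controlled by the full gradient $|\bv|_{H^1(T)}$, which reinstates precisely the circularity at $m=1$ that you are trying to avoid. The Piola transform fixes both problems at once: it satisfies $\int_F\bv\cdot\bn_F=\int_{\hat F}\hat\bv\cdot\hat\bn_{\hat F}$ and $\Div\bv=\widehat{\Div}\,\hat\bv/\det(A_T)$ exactly, which is what lets the paper keep $\|\Div\bv\|_{L^2(T)}$ (rather than $|\bv|_{H^1(T)}$) on the right-hand side. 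A smaller point, common to both your write-up and the paper's, is that the induced split $\hat T^{\rm WF}=\{F_T^{-1}(K)\}$ is not a single fixed triangulation of $\hat T$; one needs shape regularity to ensure the norm-equivalence constants are uniform over the compact family of admissible split configurations.
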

\begin{proof}
Let $\hat T$ be the reference tetrahedron, 
and let $F_T:\hat T\to T$ be an affine bijection with $F_T (\hat x) = A_T \hat x + b_T$
with $A_T\in \mathbb{R}^{3\times 3}$ and $b_T\in \mathbb{R}^3$.
We define $\hat \bv:\hat T\to \mathbb{R}^3$ via the Piola transform
\[
 \bv( x) = \frac{A_T \hat \bv(\hat x)}{\det(A_T)},\qquad x = F_T(\hat x).
\]

Let $\hat T^{\rm WF}$ be the split of $\hat T$ induced by $T^{\rm WF}$ and the mapping $F_T^{-1}$, i.e.,
\[
\hat T^{\rm WF} = \{F_T^{-1}(K):\ K\in T^{\rm WF}\}.
\]
Then $\hat \bv$ is a continuous piecewise linear polynomial with respect to
$\hat T^{\rm WF}$, and therefore by equivalence of norms, and Lemma \ref{lem:VelocityDOFs},
\begin{align*}
|\hat \bv|_{H^m(\hat T)}^2
&\le C \Big(\sum_{\hat a\in \Delta_0(\hat T)} |\hat \bv(\hat a)|^2
+ \sum_{\hat F\in \Delta_2(\hat T)} \Big|\int_{\hat F} \hat \bv \cdot \hat \bn_{\hat F}\Big|^2\\
&\qquad + \sum_{\hat F\in \Delta_2(\hat T)} \sum_{\hat e\in \Delta^I_1(\hat F^{\rm CT})\backslash \{\hat e_{\hat F}\}} \Big| \int_{\hat e} \jump{\widehat{\Div}\hat \bv}_{\hat e}\Big|^2 + \mathop{\sup_{\hat p\in \mathring{\mathcal{V}}_0(\hat T)}}_{\|\hat p\|_{L^2(\hat T)}=1} \Big| \int_{\hat T} (\widehat \Div \hat \bv)\hat p\Big|^2\Big).
\end{align*}
By well-known properties of the Piola transform, we have
\begin{align*}
\Div \bv(x) = \frac1{\det(A_T)} \widehat \Div \hat \bv(\hat x),\qquad \int_F \bv\cdot \bn_F = \int_{\hat F} \hat \bv \cdot \hat \bn_{\hat F}.
\end{align*}
Thus, we have
\begin{align*}
|\hat \bv|_{H^m(\hat T)}^2
&\le C \Big(\sum_{ a\in \Delta_0( T)} |\det(A_T) A_T^{-1} \bv(a)|^2
+ \sum_{ F\in \Delta_2( T)} \Big|\int_{ F}  \bv \cdot  \bn_{ F}\Big|^2\\
&\qquad + |\det(A_T)|^2 \sum_{ F\in \Delta_2( T)} \sum_{ e\in \Delta_1( F)\backslash \{ e_{ F}\}} \Big| \frac{|\hat e|}{|e|} \int_e \jump{{\Div} \bv}_{ e}\Big|^2
 + \mathop{\sup_{\hat p\in \mathring{\mathcal{V}}_0(\hat T)}}_{\|\hat p\|_{L^2(\hat T)}=1} \Big| \int_{\hat T} (\widehat \Div \hat \bv)\hat p\Big|^2\Big).
\end{align*}

Next, for $\hat p\in \mathring{\mathcal{V}}_0(\hat T)$ with $\|\hat p\|_{L^2(\hat T)}=1$, let $p:T\to \mathbb{R}$
be given by $p(x) = \hat p(\hat x)$.  Then $p\in \mathring{\mathcal{V}}_0( T)$,  $\|p\|_{L^2(T)} = \sqrt{6|T|}$, and
\[
\int_{\hat T} (\widehat \Div \hat \bv)\hat p = \int_T  (\Div \bv)p.
\]
We conclude 
\begin{align*}
\mathop{\sup_{\hat p\in \mathring{\mathcal{V}}_0(\hat T)}}_{\|\hat p\|_{L^2(\hat T)}=1} \Big| \int_{\hat T} (\widehat \Div \hat \bv)\hat p\Big|^2
&\le \mathop{\sup_{ p\in \mathring{\mathcal{V}}_0( T)}}_{\| p\|_{L^2( T)}=\sqrt{6|T|}} \Big| \int_T ( \Div  \bv) p\Big|^2\le C h_T^3\|\Div \bv\|_{L^2(T)}^2.
\end{align*}

Finally, we use $\|A_T^{-1}\|\le Ch_T^{-1}$ and $|\det(A_T)| = 6|T|\le Ch_T^3$ to get
\begin{align*}
|\hat \bv|_{H^m(\hat T)}^2
&\le C \Big(h_T^4 \sum_{ a\in \Delta_0( T)}|\bv(a)|^2
+ \sum_{ F\in \Delta_2( T)} \Big|\int_{ F}  \bv \cdot  \bn_{ F}\Big|^2\\
&\qquad + h_T^4\sum_{ F\in \Delta_2( T)} \sum_{ e\in \Delta^I_1( F^{\rm CT})\backslash \{ e_{ F}\}} \Big| \int_e \jump{{\Div} \bv}_{ e}\Big|^2
 +  h_T^3\|\Div \bv\|_{L^2(T)}^2\Big),
\end{align*}
and therefore
\begin{align*}
|\bv|_{H^m(\hat T)}^2 \le C h_T^{-1-2m}|\hat \bv|_{H^m(\hat T)}^2
&\le C h_T^{-1-2m}\Big(h_T^4 \sum_{ a\in \Delta_0( T)}|\bv(a)|^2
+ \sum_{ F\in \Delta_2( T)} \Big|\int_{ F}  \bv \cdot  \bn_{ F}\Big|^2\\
&\qquad + h_T^4\sum_{ F\in \Delta_2( T)} \sum_{ e\in \Delta_1( F)\backslash \{ e_{ F}\}} \Big| \int_e \jump{{\Div} \bv}_{ e}\Big|^2
 +  h_T^3\|\Div \bv\|_{L^2(T)}^2\Big)\\
 &\le C h_T^{-1-2m}\Big(h_T^4 \sum_{ a\in \Delta_0( T)}|\bv(a)|^2
+ \sum_{ F\in \Delta_2( T)} \Big|\int_{ F}  \bv \cdot  \bn_{ F}\Big|^2
%
 +  h_T^3\|\Div \bv\|_{L^2(T)}^2\Big),
 \end{align*}
 where the last inequality comes from standard trace and inverse inequalities.
\end{proof}

\begin{theorem}
The pair $\mbV_h^{\rm WF}\times \mY_h^{\rm WF}$ is inf-sup stable.
\end{theorem}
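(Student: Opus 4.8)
The plan is to repeat the argument behind Theorem~\ref{lem:InfSup}: for each $q\in\mY_h^{\rm WF}$ I would produce $\bv\in\mbV_h^{\rm WF}$ with $\Div\bv=q$ and $\|\nab\bv\|_{L^2(\Omega)}\le C\|q\|_{L^2(\Omega)}$, whence $\int_\Omega(\Div\bv)q=\|q\|_{L^2(\Omega)}^2\ge C^{-1}\|q\|_{L^2(\Omega)}\|\nab\bv\|_{L^2(\Omega)}$ gives the inf-sup bound with $\beta=C^{-1}$. Two preliminary facts are used. First, $\Div\mbV_h^{\rm WF}\subseteq\mY_h^{\rm WF}$: for $\bv\in\mbV_h^{\rm WF}$ the function $\Div\bv$ is piecewise constant on $\WFT$, has zero mean by the divergence theorem (the trace of $\bv$ vanishes), and satisfies $\theta_e(\Div\bv)=0$ for every $e\in\calE^{\calS}_h$ by the weak-continuity lemma above (cf.\ \cite{GLN20}). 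Second, since $\Omega$ is contractible, the continuous inf-sup condition gives, for $q\in L^2_0(\Omega)$, a $\bw\in\bH^1_0(\Omega)$ with $\Div\bw=q$ and $\|\bw\|_{H^1(\Omega)}\le C\|q\|_{L^2(\Omega)}$.

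Fix $q\in\mY_h^{\rm WF}$ and a corresponding $\bw$. I would define $\bPi_h\bw\in\bV_h^{\rm WF}$ by prescribing the degrees of freedom of Lemma~\ref{lem:VelocityDOFs}: the vertex values are those of a Scott--Zhang quasi-interpolant $\bPi_h^{\rm SZ}\bw\in\bpol_1^c(\WFT)$ of $\bw$; the face fluxes $\int_F\bPi_h\bw\cdot\bn_F$ equal $\int_F\bw\cdot\bn_F$; and the divergence-type functionals $\int_e\jump{\Div\bPi_h\bw}_e$ and $\int_T(\Div\bPi_h\bw)p$ (for $p\in\mathring{\mathcal{V}}_0(T)$) are set equal to $\int_e\jump{q}_e$ and $\int_T q\,p$, which make sense since $q$ is piecewise constant on $\WFT$. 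On $\p\Omega$ all of these functionals can be taken to vanish (using $\bw|_{\p\Omega}=0$ and the singular-edge constraints defining $\mY_h^{\rm WF}$), so $\bPi_h\bw\in\mbV_h^{\rm WF}$. By construction $\Div\bPi_h\bw\in\mY_h^{\rm WF}$ has the same pressure degrees of freedom as $q$ in Lemma~\ref{lem:PressureDOFs} --- the per-$T^{\rm A}$ constant part being matched through the face fluxes together with $\int_T\Div\bPi_h\bw=\int_T\Div\bw=\int_T q$ --- so the unisolvence of Lemma~\ref{lem:PressureDOFs} forces $\Div\bPi_h\bw=q$. This $\bPi_h$ is essentially the commuting projection of \cite{GLN20} evaluated at the present $\bw$.

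It remains to show $\|\nab\bPi_h\bw\|_{L^2(\Omega)}\le C\|\bw\|_{H^1(\Omega)}$, and this is where Proposition~\ref{prop:WFScaling} is essential. Split $\bPi_h\bw=\bPi_h^{\rm SZ}\bw+\bR_h\bw$ with $\bR_h\bw\in\bV_h^{\rm WF}$; then $\|\nab\bPi_h^{\rm SZ}\bw\|_{L^2(\Omega)}\le C\|\bw\|_{H^1(\Omega)}$ by standard Scott--Zhang stability, and $\bR_h\bw$ has vanishing vertex values. Applying Proposition~\ref{prop:WFScaling} with $m=1$ to $\bR_h\bw$ therefore kills its first term and leaves
\[
|\bR_h\bw|_{H^1(T)}^2\le C\Big(h_T^{-3}\!\!\sum_{F\in\Delta_2(T)}\Big|\int_F(\bw-\bPi_h^{\rm SZ}\bw)\cdot\bn_F\Big|^2+\|\Div\bR_h\bw\|_{L^2(T)}^2\Big).
\]
The flux term is controlled with $|F|\sim h_T^2$, a scaled trace inequality, and the Scott--Zhang approximation bound $\|\bw-\bPi_h^{\rm SZ}\bw\|_{L^2(T)}\le C h_T\,|\bw|_{H^1(\omega(T))}$, giving $C|\bw|_{H^1(\omega(T))}^2$; and $\|\Div\bR_h\bw\|_{L^2(T)}\le\|q\|_{L^2(T)}+\|\Div\bPi_h^{\rm SZ}\bw\|_{L^2(T)}\le C|\bw|_{H^1(\omega(T))}$. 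Summing over $T\in\calT_h$ (finite overlap of the patches $\omega(T)$) yields $\|\nab\bR_h\bw\|_{L^2(\Omega)}\le C\|\bw\|_{H^1(\Omega)}\le C\|q\|_{L^2(\Omega)}$, hence $\|\nab\bPi_h\bw\|_{L^2(\Omega)}\le C\|q\|_{L^2(\Omega)}$, completing the plan.

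The main obstacle is the construction of $\bPi_h$ with all three properties simultaneously: range in $\mbV_h^{\rm WF}$ (in particular compatible with the homogeneous boundary condition), the reproduction $\Div\bPi_h\bw=q$, and the bound $\|\nab\bPi_h\bw\|_{L^2(\Omega)}\le C\|\bw\|_{H^1(\Omega)}$. The first two follow from the unisolvence of Lemmas~\ref{lem:VelocityDOFs}--\ref{lem:PressureDOFs} and the duality between the velocity divergence functionals and the pressure functionals, plus a careful check that the boundary degrees of freedom may be set to zero. The stability is the new ingredient, and it works only because Proposition~\ref{prop:WFScaling} has been arranged so that the nonlocal edge-jump functionals $\int_e\jump{\Div\bv}_e$ no longer appear --- they have been absorbed into $\|\Div\bv\|_{L^2(T)}$ by trace and inverse inequalities --- so that, after the Scott--Zhang splitting, controlling $\|\nab\bPi_h\bw\|_{L^2(\Omega)}$ reduces to the face fluxes and the already-bounded divergence.
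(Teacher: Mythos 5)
Your proposal is correct and follows essentially the same route as the paper: define the candidate velocity through the degrees of freedom of Lemma~\ref{lem:VelocityDOFs} (Scott--Zhang values at vertices, exact face fluxes, and the divergence-type functionals matched to $q$), conclude $\Div\bv=q$ from the unisolvence in Lemma~\ref{lem:PressureDOFs}, and obtain the $H^1$ bound by applying Proposition~\ref{prop:WFScaling} to the difference with the Scott--Zhang interpolant so that the vertex term drops and the edge-jump functionals are already absorbed. The only cosmetic differences are that you take the Scott--Zhang interpolant on $\WFT$ rather than on $\calT_h$ and that you name the correction $\bR_h\bw$ explicitly; neither changes the argument.
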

\begin{proof}

Fix a $q\in \mY_h^{\rm WF}$, and let $\bw\in \bH^1_0(\Omega)$ satisfy
$\Div \bw = q$ and $\|\nab \bw\|_{L^2(\Omega)}\le C\|q\|_{L^2(\Omega)}$.
Let $\bw_h\in  \mathring{\bpol}_1(\calT_h)$ be the Scott-Zhang interpolant of $\bw$
with respect to $\calT_h$.
Define $\bv\in \mathring{\bpol}_1(\WFT)$ such that
\begin{alignat*}{2}
&\bv(a) = \bw_h(a)\qquad &&\forall a\in \Delta_0(T),\\
&\int_F (\bv\cdot \bn_F) = \int_F (\bw\cdot \bn_F)\qquad &&\forall F\in \Delta_2(T),\\
&\int_e \jump{\Div \bv}_e  = \int_e \jump{q}_e
\qquad &&\forall e\in \Delta_1^I(F^{\rm CT})\backslash \{e_F\},\ \forall F\in \Delta_2(T),\\
&\int_T (\Div \bv) p = \int_T qp\qquad &&\forall p\in \mathring{\mathcal{V}}_0(T).
\end{alignat*}

Noting $(\Div \bv-q)\in {\bm Y}_h^{\rm WF}$,
and
\begin{alignat*}{2}
&\int_e \jump{\Div \bv-q}_e=0 \qquad &&\forall e\in \Delta_1^I(F^{\rm CT})\backslash \{e_F\},\ \forall F\in \Delta_2(T),\\
%
%
&\int_T (\Div \bv-q)  p \qquad && \forall p\in \pol_0(T^{\rm A})
\end{alignat*}
for all $T\in \calT_h$ (by the divergence theorem), we conclude $\Div \bv=q$ by Lemma \ref{lem:PressureDOFs}.

We apply Proposition \ref{prop:WFScaling} to $(\bv-\bw_h)$ with $m=1$:
\begin{align*}
\|\nab (\bv-\bw_h)\|_{L^2(T)}^2
&\le C h_T^{-3}\Big(h_T^4 \sum_{ a\in \Delta_0( T)}|(\bv-\bw_h)(a)|^2
+ \sum_{ F\in \Delta_2( T)} \Big|\int_{ F}  (\bv-\bw_h) \cdot  \bn_{ F}\Big|^2\\
&\qquad 
 +  h_T^3\|\Div (\bv-\bw_h)\|_{L^2(T)}^2\Big)\\
&= C h_T^{-3}\Big( \sum_{ F\in \Delta_2( T)} \Big|\int_{ F}  (\bw-\bw_h) \cdot  \bn_{ F}\Big|^2
%
 +  h_T^3\|q-\Div \bw_h\|_{L^2(T)}^2\Big) \\
 &\le C h_T^{-3}\Big( h_T^2 \sum_{ F\in \Delta_2( T)} \|\bw-\bw_h\|_{L^2(F)}^2
%
 +  h_T^3\|q-\Div \bw_h\|_{L^2(T)}^2\Big) \\
 &\le C\Big( \|q\|_{L^2(T)}^2 + h_T^{-2}\|\bw-\bw_h\|_{L^2(T)}^2 + \|\nab(\bw-\bw_h)\|_{L^2(T)}^2 + \|\nab \bw_h\|_{L^2(T)}^2\Big).
\end{align*}

We then sum over $T\in \mct$ and apply stability and approximation properties
of the Scott-Zhang interpolant to conclude $\|\nab \bv\|_{L^2(\Omega)}\le C\|q\|_{L^2(\Omega)}$.

\end{proof}


\section{Implementation Aspects for  Powell-Sabin splits}\label{implPS}
The only tricky part to implement these finite elements is the pressure spaces since they have non-standard constraints in their definitions. 
In this section and the subsequent one, we give details to form the algebraic system for the Stokes problem.

\subsection{A basis for $\hY_h^{\rm PS}$ and the construction of the algebraic system}
Recall that the space $\hY_h^{\rm PS}$ consists
of piecewise constant functions satisfying a weak continuity property
at vertices.  Clearly, this is a non-standard space, and in particular
the space is not explicitly found in current finite element software packages.
Nonetheless, in this section, we identify a basis of the space $\mY_h^{\rm PS}$,
and as a byproduct show that the weak continuity property $\theta_z(q)=0$
can be imposed purely at the algebraic level.

As a first step, we note that, by definition of the Powell--Sabin triangulation,
\[
\PST = \{K_z^{(j)}:\ K_z^{(j)}\in \calT_z,\ z\in \calS_h\}.
\]
With this (implicit) labeling of the triangles in $\PST$, we
can write the canonical basis of $\pol_0(\PST)$
as the set $\{\varphi_z^{(j)}\}\subset \pol_0(\PST)$ with
\[
\varphi_z^{(j)}|_{K_v^{(i)}} = \delta_{v,z} \delta_{i,j}\qquad \forall z,v\in \calS_h,\ i=1,\ldots,n_v,\ j=1,\ldots,n_z.
\]

The next proposition shows that a basis of $\mY_h^{\rm PS}$ is easily extracted
from the basis of $\pol_0(\PST)$ (see Figure \ref{fig:Basis}).
\begin{proposition}\label{prop:PSBasis}
For each $z\in \calS_h$ and $j\in \{2,\ldots,n_z\}$, define
\[
\psi_z^{(j)} = \varphi_z^{(j)}+(-1)^j \varphi_z^{(1)}.
\]
Then $\{\psi_z^{(j)}:\ z\in \calS_h,\ j=2,\ldots,n_z\}$ forms a basis of $\hY_h^{\rm PS}$.
\end{proposition}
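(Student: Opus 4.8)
The plan is to verify three things: (i) each $\psi_z^{(j)}$ lies in $\hY_h^{\rm PS}$; (ii) the family $\{\psi_z^{(j)}:\ z\in\calS_h,\ 2\le j\le n_z\}$ is linearly independent; and (iii) it spans $\hY_h^{\rm PS}$. Throughout I use the structural fact, implicit in the labeling $\PST=\{K_z^{(j)}:\ K_z^{(j)}\in\calT_z,\ z\in\calS_h\}$, that every triangle of $\PST$ belongs to exactly one of the sets $\calT_z$; equivalently, the canonical functions $\varphi_z^{(j)}$ are well defined and satisfy $\varphi_z^{(j)}|_{K_v^{(i)}}=\delta_{v,z}\delta_{i,j}$. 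This is the one geometric input, and it is the only step requiring care — everything after is bookkeeping with the functionals $\theta_z$.

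For (i), fix $z$ and $j$. Since $\psi_z^{(j)}$ is supported on $K_z^{(j)}\cup K_z^{(1)}$, for any $v\ne z$ we have $\psi_z^{(j)}|_{K_v^{(i)}}=0$ for all $i$, hence $\theta_v(\psi_z^{(j)})=0$ trivially. For $v=z$ I would compute directly from the definition of $\theta_z$: if $z\in\calS_h^I$ then $\theta_z(\varphi_z^{(i)})=(-1)^{i+1}$, so $\theta_z(\psi_z^{(j)})=(-1)^{j+1}+(-1)^j(-1)^{2}=0$; if $z\in\calS_h^B$ then $n_z=2$, $\theta_z(\varphi_z^{(1)})=1$ and $\theta_z(\varphi_z^{(2)})=-1$, so $\theta_z(\psi_z^{(2)})=-1+(-1)^2\cdot 1=0$. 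Hence $\psi_z^{(j)}\in\hY_h^{\rm PS}$.

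For (ii), suppose $\sum_{z}\sum_{j=2}^{n_z}c_z^{(j)}\psi_z^{(j)}=0$. Restricting to a fixed triangle $K_z^{(j)}$ with $j\ge2$, the only term in the sum not vanishing there is $\psi_z^{(j)}$ itself (since $\psi_v^{(i)}$ is supported on $K_v^{(i)}\cup K_v^{(1)}$, and $K_z^{(j)}=K_v^{(i)}$ only for $(v,i)=(z,j)$, while $K_z^{(j)}=K_v^{(1)}$ for no $v$ because $j\ge2$), and it equals $1$ there; thus $c_z^{(j)}=0$ for all $z,j$.

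For (iii), given $q\in\hY_h^{\rm PS}$, set $a_z^{(j)}:=q|_{K_z^{(j)}}$ and $\tilde q:=q-\sum_{z}\sum_{j=2}^{n_z}a_z^{(j)}\psi_z^{(j)}$. Using $\psi_z^{(j)}=\varphi_z^{(j)}+(-1)^j\varphi_z^{(1)}$, the contributions on the triangles $K_z^{(j)}$ with $j\ge2$ cancel, so $\tilde q$ is supported on $\bigcup_z K_z^{(1)}$, with $\tilde q|_{K_z^{(1)}}=b_z:=a_z^{(1)}-\sum_{j=2}^{n_z}(-1)^j a_z^{(j)}$. Since $\hY_h^{\rm PS}$ is a subspace containing every $\psi_z^{(j)}$ by (i), we get $\tilde q\in\hY_h^{\rm PS}$, so $\theta_z(\tilde q)=0$ for all $z$; but among $\calT_z$ the function $\tilde q$ is nonzero only on $K_z^{(1)}$, so $\theta_z(\tilde q)=b_z$ in both the interior and boundary cases, forcing $b_z=0$. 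Hence $\tilde q\equiv0$ and $q=\sum_{z}\sum_{j=2}^{n_z}a_z^{(j)}\psi_z^{(j)}$, completing the proof. (Alternatively, (iii) could be obtained by a dimension count: $\theta_v(\varphi_z^{(1)})=\pm\delta_{v,z}$ shows $q\mapsto(\theta_z(q))_{z\in\calS_h}$ is surjective from $\pol_0(\PST)$ onto $\bbR^{|\calS_h|}$, whence $\dim\hY_h^{\rm PS}=\sum_z(n_z-1)$, matching the cardinality of the independent set from (ii).)
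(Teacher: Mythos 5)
Your proof is correct, and it takes a slightly different route from the paper's. The paper argues by a dimension count: it observes that $\psi_z^{(j)}\in\hY_h^{\rm PS}$, that the $\psi_z^{(j)}$ are linearly independent, and that their number $\sum_{z\in\calS_h}(n_z-1)$ equals $\dim\pol_0(\PST)-|\calS_h|$, which it asserts is the dimension of the constrained space. You instead prove spanning constructively: given $q\in\hY_h^{\rm PS}$ you subtract $\sum_z\sum_{j\ge2}q|_{K_z^{(j)}}\psi_z^{(j)}$, note the remainder $\tilde q$ is supported on the triangles $K_z^{(1)}$, and use $\theta_z(\tilde q)=\tilde q|_{K_z^{(1)}}=0$ to kill it. This buys something concrete: the paper's count $\dim\hY_h^{\rm PS}=\dim\pol_0(\PST)-|\calS_h|$ tacitly assumes the $|\calS_h|$ functionals $\theta_z$ are linearly independent on $\pol_0(\PST)$, a point the paper does not address; your constructive argument needs no such input (and your parenthetical remark, via $\theta_v(\varphi_z^{(1)})=\delta_{v,z}$, is exactly the observation that would repair the paper's count). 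Your sign computations $\theta_z(\psi_z^{(j)})=(-1)^{j+1}+(-1)^j=0$ and your identification of the disjoint partition $\PST=\{K_z^{(j)}\}$ as the essential geometric input are both accurate, and the linear-independence step by restriction to $K_z^{(j)}$, $j\ge2$, matches what the paper leaves as ``clearly linearly independent.''
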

\begin{proof}
Note that the number of functions $\{\psi_z^{(j)}\}$ given
is $ \sum_{z\in \calS_h} (n_z-1)$, and
\[
\dim Y_h^{PS} = \dim \pol_0(\PST) - |\calS_h| = \sum_{z\in \calS_h} n_z - |\calS_h| =  \sum_{z\in \calS_h} (n_z-1).
\]
Because $\psi_z^{(j)}\in \mY_h^{\rm PS}$,
and they are clearly linear independent, we conclude that $\{\psi_z^{(j)}\}$
form a basis of $\mY_h^{\rm PS}$.
\end{proof}

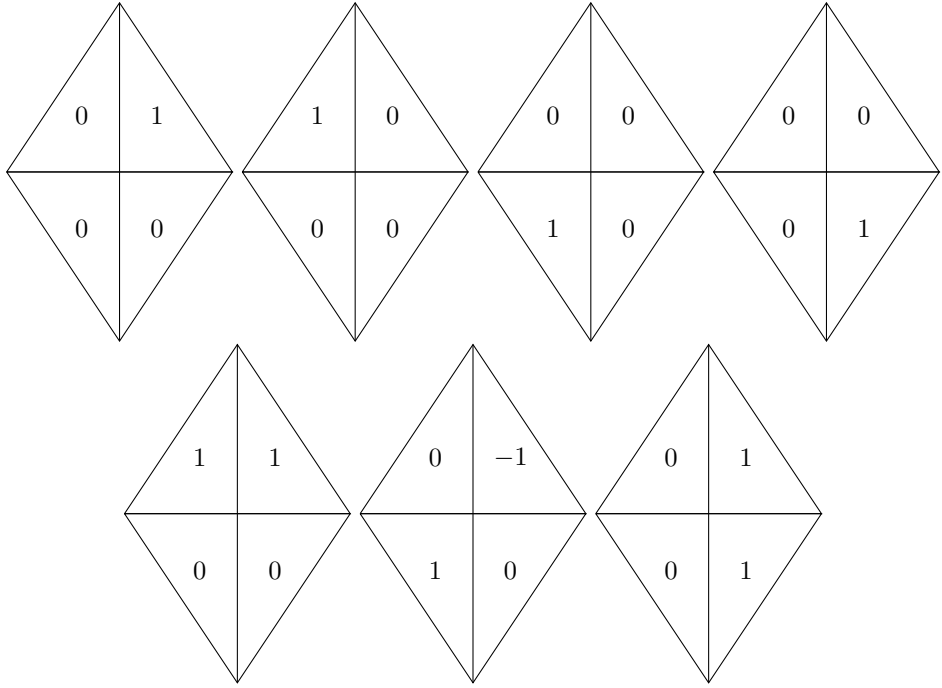
\begin{figure}
\begin{center}
\begin{tikzpicture}[scale=1.5]
\coordinate (C1) at (-1,0);
\coordinate (C2) at (1,0);
\coordinate (C3) at (0,1.5);
\coordinate (C4) at (0,-1.5);
\draw[-](C1)--(C2)--(C3)--(C1);
\draw[-](C1)--(C2)--(C4)--(C1);
\draw[-](C3)--(C4);

\draw (0.33333,0.5) node {$1$};
\draw (-0.33333,0.5) node {$0$};
\draw (0.33333,-0.5) node {$0$};
\draw (-0.33333,-0.5) node {$0$};
\end{tikzpicture}
\begin{tikzpicture}[scale=1.5]
\coordinate (C1) at (-1,0);
\coordinate (C2) at (1,0);
\coordinate (C3) at (0,1.5);
\coordinate (C4) at (0,-1.5);
\draw[-](C1)--(C2)--(C3)--(C1);
\draw[-](C1)--(C2)--(C4)--(C1);
\draw[-](C3)--(C4);

\draw (0.33333,0.5) node {$0$};
\draw (-0.33333,0.5) node {$1$};
\draw (0.33333,-0.5) node {$0$};
\draw (-0.33333,-0.5) node {$0$};
\end{tikzpicture}
\begin{tikzpicture}[scale=1.5]
\coordinate (C1) at (-1,0);
\coordinate (C2) at (1,0);
\coordinate (C3) at (0,1.5);
\coordinate (C4) at (0,-1.5);
\draw[-](C1)--(C2)--(C3)--(C1);
\draw[-](C1)--(C2)--(C4)--(C1);
\draw[-](C3)--(C4);
\draw (0.33333,0.5) node {$0$};
\draw (-0.33333,0.5) node {$0$};
\draw (0.33333,-0.5) node {$0$};
\draw (-0.33333,-0.5) node {$1$};
\end{tikzpicture}
\begin{tikzpicture}[scale=1.5]
\coordinate (C1) at (-1,0);
\coordinate (C2) at (1,0);
\coordinate (C3) at (0,1.5);
\coordinate (C4) at (0,-1.5);
\draw[-](C1)--(C2)--(C3)--(C1);
\draw[-](C1)--(C2)--(C4)--(C1);
\draw[-](C3)--(C4);

\draw (0.33333,0.5) node {$0$};
\draw (-0.33333,0.5) node {$0$};
\draw (0.33333,-0.5) node {$1$};
\draw (-0.33333,-0.5) node {$0$};
\end{tikzpicture}\\
\begin{tikzpicture}[scale=1.5]
\coordinate (C1) at (-1,0);
\coordinate (C2) at (1,0);
\coordinate (C3) at (0,1.5);
\coordinate (C4) at (0,-1.5);
\draw[-](C1)--(C2)--(C3)--(C1);
\draw[-](C1)--(C2)--(C4)--(C1);
\draw[-](C3)--(C4);

\draw (0.33333,0.5) node {$1$};
\draw (-0.33333,0.5) node {$1$};
\draw (0.33333,-0.5) node {$0$};
\draw (-0.33333,-0.5) node {$0$};
\end{tikzpicture}
\begin{tikzpicture}[scale=1.5]
\coordinate (C1) at (-1,0);
\coordinate (C2) at (1,0);
\coordinate (C3) at (0,1.5);
\coordinate (C4) at (0,-1.5);
\draw[-](C1)--(C2)--(C3)--(C1);
\draw[-](C1)--(C2)--(C4)--(C1);
\draw[-](C3)--(C4);

\draw (0.33333,0.5) node {$-1$};
\draw (-0.33333,0.5) node {$0$};
\draw (0.33333,-0.5) node {$0$};
\draw (-0.33333,-0.5) node {$1$};
\end{tikzpicture}
\begin{tikzpicture}[scale=1.5]
\coordinate (C1) at (-1,0);
\coordinate (C2) at (1,0);
\coordinate (C3) at (0,1.5);
\coordinate (C4) at (0,-1.5);
\draw[-](C1)--(C2)--(C3)--(C1);
\draw[-](C1)--(C2)--(C4)--(C1);
\draw[-](C3)--(C4);

\draw (0.33333,0.5) node {$1$};
\draw (-0.33333,0.5) node {$0$};
\draw (0.33333,-0.5) node {$1$};
\draw (-0.33333,-0.5) node {$0$};
\end{tikzpicture}

\end{center}
\caption{\label{fig:Basis}Local mesh $\calT_z$ with $z\in \calS_h^I$.
Top row: Values of canonical basis functions of piecewise constants $\{\varphi_z^{(j)}\}_{j=1}^{n_z}$.
Bottom row: Values of basis functions of piecewise constants
with weak continuity constraint $\{\psi_z^{(j)}\}_{j=2}^{n_z}$.
}
\end{figure}

We now explain how 
Proposition \ref{prop:PSBasis} provides a simple way to construct
the stiffness matrix for the Stokes problem
using the $\mbV_h^{\rm PS}\times \mY_h^{\rm PS}$
pair.  To explain
the procedure, we require some notation.

Let $A$ be the matrix associated with the bilinear form
\[
(\bv,\bw)\to \int_{\Omega} \nu \nab \bv:\nab \bw\, dx\quad \text{over }\bv,\bw\in \mbV_h^{\rm PS},
\]
and let $\tilde B$ is the matrix associated with the bilinear form
\[
(\bv,q)\to -\int_{\Omega} (\Div \bv)q\, dx\quad \text{over } \bv\in \mbV_h^{\rm PS},\ q\in \pol_0(\PST).
\]
The stiffness matrix for the Stokes problem based on the (unstable) $\mbV_h^{\rm PS}\times \pol_0(\PST)$
pair is given by
\begin{align*}
\begin{pmatrix}
A & \tilde B\\
\tilde B^\intercal & 0
\end{pmatrix}.
\end{align*}
We emphasize that this system can be easily constructed using standard finite element software packages.

Let $\{\bphi^{(i)}\}_{k=1}^N$ denote a basis of $\mathring{\bV}_h^{\rm PS}$
with $N = \dim \mathring{\bV}_h^{\rm PS}$ so that
\[
A_{i,j} = \nu \int_\Omega \nab \bphi^{(j)}: \nab \bphi^{(i)}\, dx.
\]
Let $M = \dim \pol_0(\PST)$, the number of triangles
in $\PST$, and introduce the local-to-global label mapping
$\sigma:\calS\times \{1,\ldots,n_z\}\to \{1,2,\ldots,M\}$ such that
\[
\tilde B_{i,\sigma(z,j)} = -\int_\Omega (\Div \bphi^{(i)}) \varphi_z^{(j)}\, dx.
\]
Then by Proposition \ref{prop:PSBasis}, we have for $z\in \calS_h$ and $j=2,\ldots,n_z$,
\begin{align*}
 -\int_\Omega (\Div \bphi^{(i)}) \psi_z^{(j)}\, dx 
 &= 
  -\int_\Omega (\Div \bphi^{(i)}) \varphi_z^{(j)}\, dx -(-1)^{j} \int_\Omega (\Div \bphi^{(i)}) \varphi_z^{(1)}\, dx\\
  & = \tilde B_{i,\sigma(z,j)} + (-1)^j \tilde B_{i,\sigma(z,1)}.
 \end{align*}
This identity leads to the following algorithm.

\begin{mathframed}
{\bf Algorithm}
\begin{enumerate}
\item Construct Powell--Sabin triangulation $\PST$

\item Construct $\tilde B\in \bbR^{N\times M}$ based on the $\mbV_h^{\rm PS}\times \pol_0(\PST)$ pair.
\item Set $B = \tilde B$.

\item For each $z\in \calS_h$ and for each $j\in \{2,\ldots,n_z\}$, do the elementary column operation
\[
B_{:,\sigma(z,j)} =B_{:,\sigma(z,j)} +(-1)^j B_{:,\sigma(z,1)}.
\]

\item Delete column $B_{:,\sigma(z,1)}$ for each $z\in \calS_h$. \\

\end{enumerate}
\end{mathframed}

The stiffness matrix for the Stokes problem
based on the $\mbV_h^{\rm PS}\times \hY_h^{\rm PS}$ pair is then given by
\begin{align}\label{eqn:SaddleSystem}
\begin{pmatrix}
A &  B\\
 B^\intercal & 0
\end{pmatrix}.
\end{align}
%
\section{Implemenation aspects for Worsey Farin Splits}\label{implWF}

\subsection{A basis for $\hY_h^{\rm WF}$ and the construction of the algebraic system}

Notice that the collection of local triangulations $\calT_e$ (with $e\in \calE^{\calS}_h$)
do not form a disjoint partition of the global triangulation $\WFT$.
In particular, there exists $K\in \WFT$ such that $K\in \calT_e$ and $K\in \calT_s$
with $e,s\in \calE^{\calS}_h$ and $e\neq s$.  As a result, the methodology used in the previous section
for Powell--Sabin meshes is not directly applicable.

Instead, we consider a geometric decomposition of the mesh based on the face split
points in $\WFT$.  To this end, we denote by $\calS^I_h$ and $\calS_h^B$ the sets of interior and boundary face split points, respectively,
and set $\calS_h = \calS^I_h\cup \calS_h^B$.
For $z\in \calS_h$, 
let $\calT_z:=\{K_z^{(1)},\ldots,K_z^{(n_z)}\}$ denote the set of tetrahedra
in $\WFT$ that have $z$ as a vertex.  Here, $n_z = 6$ if $z$ is an interior vertex,
and $n_z=3$ if $z$ is a boundary vertex.  For an interior face split point $z$, 
we assume  the simplices
in $\calT_z$ are labeled such that 
\[
K_z^{(1)},K_z^{(2)},K_z^{(3)}\subset T^{(1)},\qquad K_z^{(4)},K_z^{(5)},K_z^{(6)}\subset T^{(2)}
\]
for some $T^{(1)},T^{(2)}\in \mct$, and that $K_z^{(j)}$ and $K_z^{(j+3)}$ share
a common face for $j\in \{1,2,3\}$.  For a boundary split point $z$,
the set $\calT_z = \{K_z^{(1)},K_z^{(2)},K_z^{(3)}\}$ is labeled arbitrarily.

We clearly have
\begin{equation}\label{eqn:3DDecomposition}
\WFT = \{K_z^{(j)}:\ z\in \calS_h, j=1,\ldots,n_z\},
\end{equation}
and the map $(z,j)\to K_z^{(j)}$ is injective.  Furthermore, each local partition $\calT_z$
contains three singular edges.
\begin{proposition}\label{prop:WFDim}
There holds
\[
\dim \hY_h^{\rm WF} = 4 |\calS_h^I|+|\calS_h^B|.
\]
\end{proposition}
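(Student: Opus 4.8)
The plan is to compute $\dim \hY_h^{\rm WF}$ as $\dim \mathring{\pol}_0(\WFT)$ minus the number of independent linear constraints imposed by the conditions $\theta_e(q) = 0$ over all singular edges $e \in \calE^{\calS}_h$. First I would count the pieces: by the decomposition \eqref{eqn:3DDecomposition} and injectivity of $(z,j)\mapsto K_z^{(j)}$, the number of tetrahedra in $\WFT$ is $\sum_{z\in \calS_h} n_z = 6|\calS_h^I| + 3|\calS_h^B|$, so $\dim \mathring{\pol}_0(\WFT) = 6|\calS_h^I| + 3|\calS_h^B| - 1$. Next I would count the singular edges. Since each face $F$ of the unrefined mesh $\mct$ contributes exactly three singular edges to $\WFT$ (as noted after the definition of singular edge), we have $|\calE^{\calS}_h| = 3 \cdot(\text{number of faces of }\mct)$; an interior face of $\mct$ yields interior singular edges and a boundary face yields boundary singular edges, so $|\calE^{\calS,I}_h| = 3 F_{\mct}^I$ and $|\calE^{\calS,B}_h| = 3 F_{\mct}^B$ where $F_{\mct}^I, F_{\mct}^B$ are the numbers of interior and boundary faces of $\mct$.

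The main work is to show that the $|\calE^{\calS}_h|$ constraints $\theta_e(q)=0$ are \emph{not} all independent, and to pin down the rank of the constraint system. Here I would use the local structure: fix a face split point $z\in \calS_h^I$ and look at the three singular edges $e_1, e_2, e_3$ emanating from $z$ within the two tetrahedra $T^{(1)}, T^{(2)}\in \mct$ sharing the corresponding face $F$. These three edges all have $z$ as an endpoint, and the four tetrahedra of $\WFT$ surrounding each $e_i$ are among the six simplices of $\calT_z$. Restricting a piecewise constant $q$ to $\calT_z$, the three jump-type functionals $\theta_{e_1}, \theta_{e_2}, \theta_{e_3}$ are linear functionals on $\RRR^{n_z} = \RRR^6$, and I expect them to satisfy exactly one linear dependence (an alternating sum of the three vanishes identically, reflecting that going around $z$ the contributions telescope). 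Thus each interior face split point reduces the constraint count by one relative to the naive $3 F_{\mct}^I$; similarly, each boundary face split point carries three singular-edge constraints that I expect to have rank $2$ locally (boundary $\theta_e$ is a difference of two values), again losing one. Granting these local rank computations and that the local dependencies are the only ones (the constraints at distinct split points involve disjoint triangle sets by the injectivity of $(z,j)\mapsto K_z^{(j)}$, except that a triangle may lie in two different $\calT_e$'s — which is precisely why the bookkeeping must be done vertex-by-vertex rather than edge-by-edge), the rank of the full constraint system on $\pol_0(\WFT)$ is
\[
\big(3 F_{\mct}^I - |\calS_h^I|\big) + \big(3 F_{\mct}^B - |\calS_h^B|\big) = \big(|\calE^{\calS,I}_h| - |\calS_h^I|\big) + \big(|\calE^{\calS,B}_h| - |\calS_h^B|\big).
\]

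Finally I would assemble the count. Using $F_{\mct}^I = \tfrac13 |\calE^{\calS,I}_h|$ etc., and $|\calS_h^I| = F_{\mct}^I$, $|\calS_h^B| = F_{\mct}^B$ (each face split point corresponds to exactly one face of $\mct$), we get the rank of the constraint system equals $2|\calS_h^I| + 2|\calS_h^B|$, and then
\[
\dim \hY_h^{\rm WF} = \big(6|\calS_h^I| + 3|\calS_h^B| - 1\big) - \big(2|\calS_h^I| + 2|\calS_h^B|\big) + \delta,
\]
where $\delta$ accounts for whether the mean-zero constraint is independent of the others and for the exact split-point bookkeeping; carrying this through should give $4|\calS_h^I| + |\calS_h^B|$ as claimed. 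The hard part will be the local rank analysis at a face split point: proving that the three functionals $\theta_{e_1},\theta_{e_2},\theta_{e_3}$ restricted to $\calT_z$ have rank exactly $2$ (interior case), which requires a careful look at the combinatorics of which of the six tetrahedra of $\calT_z$ surround each singular edge and with what sign they enter the alternating sums — essentially a $3\times 6$ matrix whose rank must be computed — together with the argument that no global dependencies beyond these local ones exist. An alternative, possibly cleaner route would be to use the degrees of freedom in Lemma \ref{lem:PressureDOFs}: a function in $\hY_h^{\rm WF}$ is determined by $\int_e \jump{q}_e$ over $e \in \Delta_1^I(F^{\rm CT})\setminus\{e_F\}$ for interior faces $F$ only, plus $\int_T qp$ for $p \in \pol_0(T^{\rm A})$ for all $T$; counting these DOFs directly (two per interior face of $\mct$, i.e., $2|\calS_h^I|$, plus four per tetrahedron, i.e., $4|\mct|$, minus one for the global mean-zero condition) and reconciling with $4|\calS_h^I| + |\calS_h^B|$ via Euler-type identities relating $|\mct|$, $F_{\mct}^I$, $F_{\mct}^B$ would sidestep the matrix rank computation, and I would likely present the proof this way.
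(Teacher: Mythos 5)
Your preferred route---counting the unisolvent degrees of freedom of Lemma \ref{lem:PressureDOFs} restricted to interior faces (two per interior face of $\mct$, i.e.\ $2|\calF^I_h|=2|\calS_h^I|$) plus four per tetrahedron, and converting $4|\calT_h|=2|\calS_h^I|+|\calS_h^B|$ via the face-counting identity $4|\calT_h|=2|\calF^I_h|+|\calF^B_h|$---is exactly the paper's proof, so no Euler-type identities beyond that elementary face count are needed. The only loose end is your $\delta$: no $-1$ for a mean-zero condition should appear, since $\hY_h^{\rm WF}$ is to be read as a subspace of $\pol_0(\WFT)$ without the zero-mean constraint (the $\mathring{\pol}_0$ in its definition is evidently a slip, as otherwise $\mY_h^{\rm WF}=\hY_h^{\rm WF}\cap L^2_0(\Omega)$ would be redundant); with $\delta=0$ both your constraint-rank computation and the DOF count land exactly on $4|\calS_h^I|+|\calS_h^B|$. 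Your first route (rank of the $\theta_e$ constraints, with local rank $2$ at each face split point) is also viable and is essentially what Section \ref{implWF} later exploits to build a basis, but it requires the additional argument that the local dependencies are the only ones, which the DOF route avoids.
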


\begin{proof}
By Proposition \ref{lem:PressureDOFs}, we have
\[
\dim \hY_h^{\rm WF} = 4 |\calT_h| + 2 |\calF^I_h|,
\]
where $|\calF^I_h|$ is the number of interior faces in $\calT_h$.
From \eqref{eqn:3DDecomposition}, we have
\[
12 |\calT_h| = |\WFT|  = 6|\calS_h^I| + 3|\calS_h^B|,
\]
and by construction of the Worsey-Farin split, there holds
\[
|\calF_h| = |\calS^I_h|.
\]
Therefore,
\begin{align*}
\dim \hY_h^{\rm WF} 
&= 4 |\calT_h| + 2 |\calF^I_h| = \frac13\big(6|\calS_h^I| + 3|\calS_h^B|\big)+2|\calS_h^I|= 4 |\calS_h^I| + |\calS_h^B|.
\end{align*}
\end{proof}

For an interior split point $z$, and for a piecewise constant function $q$ on $\calT_z$, the three constraints
$\theta_e(q)=0$ read
\begin{align*}
q_1 - q_2+q_5-q_4 &=0,\\
q_2 -q_3+q_6-q_5 & = 0,\\
q_3 - q_1+q_4 -q_6 & =0,
\end{align*}
where $q_j = q|_{K^{(j)}_z}$

We write this as a $3\times 6$ linear system
\begin{align*}
C\vec{q}:=\begin{pmatrix}
1 & -1 & 0 & -1 & 1 & 0\\ 
0 & 1 & -1 & 0 & -1 & 1\\
-1 &0 & 1 & 1 & 0 & -1
\end{pmatrix}
\begin{pmatrix}
q_1\\
q_2\\
q_3\\
q_4\\
q_5\\
q_6
\end{pmatrix}=0.
\end{align*}
We clearly see that this matrix has rank $2$ (e.g., adding the first and third rows gets the negation of the second row).
We find that 
the nullspace of $C$ is given by
\begin{align*}
{\rm null}(C)= {\rm span}\left\{
\begin{pmatrix}
1\\
1\\
1\\
0\\
0\\
0\end{pmatrix},
\begin{pmatrix}
1\\
0\\
0\\
1\\
0\\
0
\end{pmatrix},
\begin{pmatrix}
0\\
1\\
0\\
0\\
1\\
0
\end{pmatrix},
\begin{pmatrix}
-1\\
-1\\
0\\
0\\
0\\
1
\end{pmatrix}
\right\}.
\end{align*}

These four vectors implicitly give us a basis for $\hY_h^{\rm WF}$.
In particular, we have
\begin{proposition}\label{prop:WFBasis}
For $z\in \calS_h$ and $j\in \{1,2,\ldots,n_z\}$,
let $\varphi_z^{(j)}$ be the piecewise constant function
\[
\varphi_z^{(j)}|_{K_v^{(i)}} = \delta_{v,z}\delta_{i,j}\qquad \forall v,z\in \calS_h,\ i=1,\ldots,n_v,\ j=1,\ldots,n_z.
\]
For an interior face split point $z$, define
\begin{align*}
\psi_z^{(3)} &= \varphi_z^{(3)}+\varphi_z^{(1)}+\varphi_z^{(2)},\\
\psi_z^{(4)} &= \varphi_z^{(4)}+\varphi_z^{(1)},\\
\psi_z^{(5)} &= \varphi_z^{(5)}+\varphi_z^{(2)},\\
\psi_z^{(6)} &= \varphi_z^{(6)}-\varphi_z^{(1)}-\varphi_z^{(2)}.
\end{align*}
For a boundary face split point $z$, define
\begin{align*}
\psi_z^{(3)} &= \varphi_z^{(3)}+\varphi_z^{(1)}+\varphi_z^{(2)}.
\end{align*}
Then $\{\psi_z^{(j)}\}$ is a basis of $\hY_h^{\rm WF}$.
\begin{proof}
The proof essentially follows from the same arguments as
Proposition \ref{prop:PSBasis}, noting that the number of given
$\psi_z^{(j)}$ is
\begin{align*}
4 |\calS_h^I|+|\calS_h^B| = \dim \hY_h^{\rm WF}
\end{align*}
by Proposition \ref{prop:WFDim}.
\end{proof}

\end{proposition}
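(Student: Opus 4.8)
The plan is to mirror the structure of the proof of Proposition \ref{prop:PSBasis}, splitting the work into a dimension count, a membership check, and a linear-independence check. Since the global triangulation decomposes as $\WFT = \{K_z^{(j)} : z\in \calS_h,\ j=1,\ldots,n_z\}$ with the map $(z,j)\mapsto K_z^{(j)}$ injective, the functions $\{\varphi_z^{(j)}\}$ form the canonical basis of $\pol_0(\WFT)$, and the proposed $\psi_z^{(j)}$ are genuinely supported on the disjoint local patches $\calT_z$. The count is immediate: for each interior split point we produce four functions and for each boundary split point one function, giving $4|\calS_h^I| + |\calS_h^B|$, which equals $\dim \hY_h^{\rm WF}$ by Proposition \ref{prop:WFDim}.

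First I would verify membership, i.e. that each $\psi_z^{(j)} \in \hY_h^{\rm WF}$. The space $\hY_h^{\rm WF}$ is cut out of $\mathring{\pol}_0(\WFT)$ by the constraints $\theta_e(q) = 0$ for all $e\in \calE^{\calS}_h$; since each singular edge $e$ lies in exactly one local patch $\calT_z$ (it emanates from a face split point $z$), and $\psi_z^{(j)}$ vanishes outside $\calT_z$, it suffices to check the three constraints $C\vec q = 0$ on each patch. This is exactly the statement that the coefficient vectors of $\psi_z^{(3)},\psi_z^{(4)},\psi_z^{(5)},\psi_z^{(6)}$ are the four spanning vectors of $\mathrm{null}(C)$ displayed above, which one reads off directly; for a boundary split point the single constraint $\theta_e(q)=0$ similarly forces the $(1,1,1)$ pattern. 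One small point to address: $\hY_h^{\rm WF}$ already requires mean zero is \emph{not} imposed here (the tilde-space sits in $\mathring{\pol}_0(\WFT)$, so this is automatic) — I would just note the relevant containment is into $\mathring{\pol}_0(\WFT)$ and the $\theta_e$-constraints, matching the definition.

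Next I would check linear independence. Because the patches $\calT_z$ for distinct $z$ overlap only in lower-dimensional sets and the $\psi_z^{(j)}$ are piecewise constants supported on the respective $\calT_z$, a vanishing linear combination restricts patch-by-patch: for each fixed $z$, the combination $\sum_j c_j \psi_z^{(j)}$ must vanish on $\calT_z$. Within a single interior patch the four vectors $(1,1,1,0,0,0)^\intercal$, $(1,0,0,1,0,0)^\intercal$, $(0,1,0,0,1,0)^\intercal$, $(-1,-1,0,0,0,1)^\intercal$ are manifestly linearly independent (look at the last three coordinates, then the third), so all $c_j = 0$; the boundary case is trivial. Hence $\{\psi_z^{(j)}\}$ is a linearly independent subset of $\hY_h^{\rm WF}$ of cardinality equal to $\dim \hY_h^{\rm WF}$, so it is a basis.

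The only genuine subtlety — and the place I would be most careful — is the bookkeeping that makes the ``patch-by-patch'' reduction rigorous: one must be sure that every singular edge belongs to exactly one patch $\calT_z$ (so the constraints decouple) and that no $K\in\WFT$ carries two different $\varphi_z^{(j)}$ labels (so supports decouple). Both facts are consequences of the decomposition \eqref{eqn:3DDecomposition} and the construction of the Worsey-Farin split, already recorded in the excerpt, so the proof reduces to citing these and performing the trivial nullspace/independence computations exactly as in Proposition \ref{prop:PSBasis}.
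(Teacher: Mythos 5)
Your proof is correct and follows essentially the same route as the paper's: the paper likewise reduces the claim to the cardinality count $4|\calS_h^I|+|\calS_h^B|=\dim \hY_h^{\rm WF}$ from Proposition \ref{prop:WFDim} together with the membership and linear-independence checks of Proposition \ref{prop:PSBasis}, which you have simply written out in full (the nullspace verification of $C\vec q=0$ on each patch $\calT_z$ and the decoupling of patches via the injectivity of $(z,j)\mapsto K_z^{(j)}$ and the fact that each singular edge emanates from exactly one face split point). Your side remark about the mean-zero condition is well taken: the appearance of $\mathring{\pol}_0(\WFT)$ in the definition of $\hY_h^{\rm WF}$ is evidently a typo for $\pol_0(\WFT)$, since otherwise the stated $\psi_z^{(j)}$ would not lie in the space and the subsequent intersection with $L^2_0(\Omega)$ defining $\mY_h^{\rm WF}$ would be redundant.
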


As in the two-dimensional case,
Proposition \ref{prop:WFBasis} give
an algorithm
to construct the stiffness matrix for the Stokes problem
using the $\mbV_h^{\rm WF}\times \hY_h^{\rm WF}$
pair.  First, we construct the stiffness matrix
based on the $\mbV_h^{\rm WF}\times \pol_0(\WFT)$
pair:
\begin{align*}
\begin{pmatrix}
A & \tilde B\\
\tilde B^\intercal & 0
\end{pmatrix},
\end{align*}
and then perform elementary column
operations on the $\tilde B$.  

Let $\{\bphi^{(k)}\}_{k=1}^N$ denote a basis of $\mbV_h^{\rm WF}$
with $N = \dim \bV_h^{\rm PS}$
 Let $M = \dim \pol_0(\PST)$, the number of tetrahedra
in $\WFT$, and introduce the local-to-global label mapping
$\sigma:\calS_h\times \{1,\ldots,n_z\}\to \{1,2,\ldots,M\}$ such that
\[
\tilde B_{k,\sigma(z,j)} = -\int_\Omega (\Div \bphi^{(k)}) \varphi_z^{(j)}.
\]

\begin{mathframed}
{\bf Algorithm 1}\label{Alg.1}
\begin{enumerate}
\item Construct Worsey-Farin triangulation $\PST$

\item Construct $\tilde B\in \bbR^{N\times M}$ based on the $\mbV_h^{\rm WF}\times \pol_0(\WFT)$ pair.
\item Set $B = \tilde B$.

\item For each interior face split point $z\in \calS_h$ do the elementary column operations
\begin{align*}
B_{:,\sigma(z,3)} &=B_{:,\sigma(z,3)} +B_{:,\sigma(z,1)}+B_{:,\sigma(z,2)},\\
B_{:,\sigma(z,4)} &=B_{:,\sigma(z,4)} +B_{:,\sigma(z,1)},\\
B_{:,\sigma(z,5)} &=B_{:,\sigma(z,5)} +B_{:,\sigma(z,2)},\\
B_{:,\sigma(z,6)} &=B_{:,\sigma(z,6)} -B_{:,\sigma(z,1)}-B_{:,\sigma(z,2)}.
\end{align*}

\item For each boundary face split point $z\in \calS_h$ do the elementary column operation
\begin{align*}
B_{:,\sigma(z,3)} &=B_{:,\sigma(z,3)} +B_{:,\sigma(z,1)}+B_{:,\sigma(z,2)}.
\end{align*}

\item Delete columns $B_{:,\sigma(z,1)}$ and $B_{:,\sigma(z,2)}$ for each $z\in \calS_h$. \\

\end{enumerate}
\end{mathframed}

The stiffness matrix for the Stokes problem
based on the $\mbV_h^{\rm WF}\times \hY_h^{\rm WF}$ pair is then given by
\begin{align}
\begin{pmatrix}
A &  B\\
 B^\intercal & 0
\end{pmatrix},
\label{Eq:Stiffness_Matrix}
\end{align}


\section{Numerical Experiments}\label{numerical}

In this section, we perform some simple numerical experiments for the Stokes problem
on Powell--Sabin and Worsey--Farin splits.
We note standard theory shows that the velocity and pressure errors satisfy
\begin{align}
\label{ineqvub}
|\bu-\bu_h|_{H^1(\Omega)} &\le (1+\beta^{-1}) \inf_{\bv_h\in\bV_h} |\bv_h-\bu|_{H^1(\Omega)},\\
\label{ineqpub}
\|p-p_h\|_{L^2(\Omega)} &\le \inf_{q \in Y_h} \| p - q\|_{L^2(\Omega)} + \frac{\nu}{\beta}|\bu-\bu_h|_{H^1(\Omega)} ,
\end{align}
where either $\bV_h\times Y_h = \bV_h^{\rm PS}\times \mY_h^{\rm PS}$ or $\bV_h\times Y_h = \mbV_h^{\rm WF}\times \mY_h^{\rm WF}$, $\nu>0$ is the viscosity,
and $\beta$ is the inf-sup constant for the finite element pair $\bV_h\times Y_h$.

\subsection{The Stokes pair on Powell-Sabin Splits}
We consider the example such that the data
is taken to be 
$\Omega = (0,1)^2$, and the source function is chosen such that 
the exact velocity and pressure solutions for \eqref{eqn:3DProblem} are given respectively as
\begin{align}
\label{eqn:TestProblem}
\bu=\begin{pmatrix}
\pi\sin^2(\pi x_1)\sin(2\pi x_2) \\
-\pi\sin^2(\pi x_2)\sin(2\pi x_1)
\end{pmatrix},\quad
 p = \cos(\pi x_1)\cos(\pi x_2) .
\end{align}

Let $\mct$ be a Delaunay triangulation of $\Omega$ and $\calT_h^{PS}$ the corresponding Powell-Sabin global triangulation.


 The resulting errors, rates of convergence, and inf-sup constants are listed in 
Tables \ref{tableOne} and \ref{tableTwo} for viscosities $\nu = 1$ and $\nu = 10^{-2}$, respectively.
The results show that the $L^2$ pressure error and the $H^1$ velocity error converge  
with linear rate, the discrete velocity solution (and error) are independent of the viscosity $\nu$,
and the pressure error improves for small viscosity.
The experiments also show that the inf-sup constant does not deteriorate as the mesh
is refined with $\beta \approx 0.1$.
These results are in agreement with the theoretical estimates \eqref{ineqvub}--\eqref{ineqpub}




 In Tables \ref{tableThree} and \ref{tableFour}, we compute the right-hand side of \eqref{ineqpub} and 
\eqref{ineqvub}, respectively, and compare the data with the computed errors $\|p-p_h\|_{L^2(\Omega)}$
and $|\bu-\bu_h|_{H^1(\Omega)}$.  Again, the results are consistent with \eqref{ineqvub}--\eqref{ineqpub},
and they suggest that  the  term $|\bu-\bu_h|_{H^1(\Omega)}$ is
 the dominant term in the pressure error \eqref{ineqpub}.

%


\begin{table}
\caption{\label{tableOne}Errors and rates of convergence
for example \eqref{eqn:TestProblem} with $\nu=1$.}
 \begin{tabular}{||c c c c c c c||} 
 \hline
 $h$ & $\|\bu-\bu_h\|_{L^2(\Omega)}$ & rate & $\|p-p_h\|_{L^2(\Omega)}$ & rate & $\|\nab\cdot\bu_h\|_{L^2(\Omega)}$
 & $\beta$ \\ [0.5ex] 
 \hline
$2^{-2}$ &1.70E-01	&--		&5.26E 00	&--              &2.70E-14 &1.56E-01\\
$2^{-3}$ &5.66E-02	&1.587	&3.77E 00	&0.480       &6.65E-14 &1.38E-01\\
$2^{-4}$ &1.35E-02	&2.068	&1.68E 00	&1.166       &2.38E-13 &1.07E-01\\
$2^{-5}$ &3.35E-03	&2.011	&8.28E-01	&1.021       &8.38E-12 &1.06E-01\\
$2^{-6}$ &8.77E-04	&1.934	&4.25E-01	&0.962       &4.05E-10 	&9.34E-02\\
 \hline
\end{tabular}
\end{table}

\begin{table}
\caption{\label{tableTwo}Errors and rates of convergence
for example \eqref{eqn:TestProblem} with $\nu=10^{-2}$.}
 \begin{tabular}{||c c c c c c||} 
 \hline
 $h$ & $\|\bu-\bu_h\|_{L^2(\Omega)}$ & rate & $\|p-p_h\|_{L^2(\Omega)}$ & rate & $\|\nab\cdot\bu_h\|_{L^2(\Omega)}$\\ [0.5ex] 
 \hline
$2^{-2}$ &1.70E-01	&--		&1.02E-01	&--              &2.43E-14	\\
$2^{-3}$ &5.66E-02	&1.587	&5.79E-02	&0.816       &5.88E-14	\\
$2^{-4}$ &1.35E-02	&2.068	&2.76E-02	&1.069       &2.36E-13	\\
$2^{-5}$ &3.35E-03	&2.011	&1.37E-02	&1.010       &8.39E-12	\\
$2^{-6}$ &8.77E-04	&1.934	&6.96E-03	&0.977       &4.05E-10\\
 \hline
\end{tabular}
\end{table}

\begin{table}
\caption{\label{tableThree}Errors
 for example \eqref{eqn:TestProblem} with $\nu=10^{-2}$ and the RHS of \eqref{ineqpub} .}
 \begin{tabular}{||c c c c c c||} 
 \hline 
 $h$ & $\|p-p_h\|_{L^2(\Omega)}$ & $|\bu-\bu_h|_{H^1(\Omega)}$  & $\beta$&$\inf_{q \in \mY_h^{\rm PS}} \| p - q\|_{L^2(\Omega)}$ & RHS of \eqref{ineqpub}  \\ [0.5ex] 
 \hline
$2^{-2}$ &1.02E-01	&3.77E 00	&1.56E-01     &6.08E-02	&3.02E-01\\
$2^{-3}$ &5.79E-02	&2.17E 00	&1.38E-01     &2.77E-02	&1.84E-01\\
$2^{-4}$ &2.76E-02	&1.07E 00	&1.07E-01     &1.35E-02    &1.13E-01\\
$2^{-5}$ &1.37E-02	&5.32E-01	&1.06E-01     &6.61E-03	&5.67E-02\\
$2^{-6}$ &6.96E-03	&2.72E-01	&9.34E-02     &3.28E-03	&3.24E-02\\
 \hline
\end{tabular}
\end{table}
\begin{table}
\caption{\label{tableFour}Errors
 for example \eqref{eqn:TestProblem} with $\nu=10^{-2}$ and the RHS of \eqref{ineqvub} .}
 \begin{tabular}{||c c c c c||} 
 \hline 
 $h$ & $|\bu-\bu_h|_{H^1(\Omega)}$  & $\beta$&$\inf_{\bv_h\in\bV_h^{\rm PS}} |\bv_h-\bu|_{H^1(\Omega)}$ & RHS of \eqref{ineqvub}  \\ [0.5ex] 
 \hline
$2^{-2}$ &3.77E 00	&1.56E-01     &3.08E 00	&2.28E+01\\
$2^{-3}$ &2.17E 00	&1.38E-01     &1.63E 00	&1.34E+01\\
$2^{-4}$ &1.07E 00	&1.07E-01     &8.04E-01    &8.31E 00\\
$2^{-5}$ &5.32E-01	&1.06E-01     &4.06E-01	&4.23E 00\\
$2^{-6}$ &2.72E-01	&9.34E-02     &2.05E-01	&2.39E 00\\
 \hline
\end{tabular}
\end{table}

\subsection{The Stokes pair on Worsey-Farin Splits}
We consider the example such that the data
is taken to be 
$\Omega = (0,1)^3$, and the source function is chosen such that 
the exact velocity and pressure solutions for \eqref{eqn:3DProblem} are given respectively as
\begin{align}
\label{eqn:TestProblem1}
\bu=\begin{pmatrix}
\pi\sin^2(\pi x_1)\sin(2\pi x_2) \\
-\pi\sin^2(\pi x_2)\sin(2\pi x_1)\\
0
\end{pmatrix},\quad
 p = \cos(\pi x_1)\cos(\pi x_2)\cos(\pi x_3) .
\end{align}

 Let $\mct$ be a Delaunay triangulation of $\Omega$ and $\calT_h^{WF}$ be the corresponding Worsey-Farin global triangulation.

 The resulting rates of convergence of the numerical experiments for viscosities 
 $\nu = 1$ and $\nu = 10^{-3}$ are listed in
Tables \ref{tableFive} and \ref{tableSix}, respectively.  We also 
state the computed inf-sup constant on these meshes, and the results
show that that it stays uniformly bounded from below with $\beta \approx 0.13$ on all meshes.
The stated errors, especially those in Table \ref{tableFive}, indicate that the rates of convergence
are still in the preasymptotic regime.  On the other hand, 
for small viscosity value $\nu=10^{-3}$, Table \ref{tableSix} shows
that the pressure error converges with linear rate.  This behavior suggests
that the velocity error is the dominating term in \eqref{ineqpub}.

To verify this claim, we explicitly compute
the right-hand side of \eqref{ineqpub} and \eqref{ineqvub}
and report the results in Tables \ref{tableSeven}
and \ref{tableEight}, respectively.
The results show that indeed $|\bu-\bu_h|_{H^1(\Omega)}$
is the dominating term in the pressure error \eqref{ineqpub}.

\begin{table}
\caption{\label{tableFive}Errors and rates of convergence
for example \eqref{eqn:TestProblem1} with $\nu=1$.}
 \begin{tabular}{||c c c c c c c||} 
 \hline
 $h$ & $\|\bu-\bu_h\|_{L^2(\Omega)}$ & rate & $\|p-p_h\|_{L^2(\Omega)}$ & rate & $\|\nab\cdot\bu_h\|_{L^2(\Omega)}$ & $\beta$\\ [0.5ex] 
 \hline
$1/2      $ &1.29E 00	&--		&9.81E 00	&--              &5.07E-14 	&1.31E-01\\
$1/4      $ &8.58E-01	&0.588	&19.4E 00	&-0.98        &5.20E-13 	&1.31E-01\\
$1/8      $ &3.93E-01	&1.286	&16.6E 00	&0.414       &2.68E-12            &1.32E-01\\
$1/16$     &1.32E-01    &1.573  & 10.5E 00  &0.667            &4.10E-12             &1.32E-01\\
$1/32$     &3.69E-02    &1.839  & 5.75E 00  &0.872            &4.32E-12             &1.32E-01\\
$1/48$     &1.68E-02    &1.941  & 3.93E 00  &0.936            &6.07E-12             &1.32E-01\\
 \hline
\end{tabular}
\end{table}

\begin{table}
\caption{\label{tableSix}Errors and rates of convergence
for example \eqref{eqn:TestProblem1} with $\nu=10^{-3}$.}
 \begin{tabular}{||c c c c c c||} 
 \hline
 $h$ & $\|\bu-\bu_h\|_{L^2(\Omega)}$ & rate & $\|p-p_h\|_{L^2(\Omega)}$ & rate & $\|\nab\cdot\bu_h\|_{L^2(\Omega)}$ \\ [0.5ex] 
 \hline
$1/2      $ &1.29E 00	&--		&1.33E-01	&--          &1.28E-15\\
$1/4      $ &8.58E-01	&0.588	&6.97E-02	&0.932       &3.43E-14\\
$1/8      $ &3.93E-01	&1.286	&3.70E-02	&0.911       &3.22E-13            	\\
$1/16    $ &1.32E-01	&1.574	&1.91E-02 	&0.953       &6.40E-13                   \\
$1/32    $ &3.69E-02	&1.838	&9.68E-03 	&0.980       &9.52E-13                   \\
$1/48    $ &9.63E-03	&1.940	&4.89E-03 	&0.983       &1.03E-12                   \\
 \hline
\end{tabular}
\end{table}
\begin{table}
\caption{\label{tableSeven}Errors
 for example \eqref{eqn:TestProblem1} with $\nu=10^{-3}$ and the RHS of \eqref{ineqpub} .}
 \begin{tabular}{||c c c c c c||} 
 \hline 
 $h$ & $\|p-p_h\|_{L^2(\Omega)}$ & $|\bu-\bu_h|_{H^1(\Omega)}$  & $\beta$&$\inf_{q \in \mY_h^{\rm WF}} \| p - q\|_{L^2(\Omega)}$ & RHS of \eqref{ineqpub}  \\ [0.5ex] 
 \hline
$1/2      $ &1.33E-01	&1.07E+01	&1.31E-01     &5.00E-01	&5.81E-01\\
$1/4      $ &6.97E-02	&8.50E 00	&1.31E-01     &6.70E-02	&1.31E-01\\
$1/6      $ &4.81E-02	&6.82E 00	&1.32E-01     &4.43E-02    &9.59E-02\\
$1/8      $ &3.70E-02	&5.60E 00	&1.32E-01     &3.30E-02	&7.54E-02\\
$1/10    $ &3.02E-02	&4.71E 00	&1.32E-01    &2.63E-02	&6.19E-02\\
$1/12    $ &2.55E-02	&4.06E 00	&1.32E-01    &2.19E-02	&5.26E-02\\
 \hline
\end{tabular}
\end{table}
\begin{table}
\caption{\label{tableEight}Errors
 for example \eqref{eqn:TestProblem1} with $\nu=10^{-3}$ and the RHS of \eqref{ineqvub} .}
 \begin{tabular}{||c c c c c||} 
 \hline 
 $h$ & $|\bu-\bu_h|_{H^1(\Omega)}$  & $\beta$&$\inf_{\bv_h\in\mbV_h^{\rm WF}} |\bv_h-\bu|_{H^1(\Omega)}$ & RHS of \eqref{ineqvub}  \\ [0.5ex] 
 \hline
$1/2      $ &1.07E+01	&1.31E-01     &9.99E 00	&8.62E+01\\
$1/4      $ &8.50E 00	&1.31E-01     &5.85E 00	&5.05E+01\\
$1/6      $ &6.82E 00	&1.32E-01     &4.08E 00    &3.49E+01\\
$1/8      $ &5.60E 00	&1.32E-01     &3.10E 00	&2.65E+01\\
$1/10    $ &4.71E 00	&1.32E-01    &2.49E 00	&2.13E+01\\
$1/12    $ &4.06E 00	&1.32E-01    &2.08E 00	&1.78E+01\\
 \hline
\end{tabular}
\end{table}
\subsection{Iterated Penalty Method for $\bf{(P_1,P_0)}$ pair on Worsey-Farin Splits}
We consider the example such that the data
is taken to be 
$\Omega = (0,1)^3$, and the source function is chosen such that 
the exact velocity and pressure solutions for \eqref{eqn:3DProblem} are given respectively as
\begin{align}
\label{eqn:TestProblem2}
\bu(x,y,z)=\nab\times\begin{pmatrix}
0 \\
g\\
g
\end{pmatrix},\quad
 p = \frac{1}{9}\frac{\partial^2 g}{\partial x \partial y},
\end{align}
where 
\begin{equation*}
g = g(x,y,z) = 2^{12} (x-x^2)^2  (y-y^2)^2  (z-z^2)^2.
\end{equation*}
 Similar to the previous section, we let $\mct$ be a Delaunay triangulation of $\Omega$ and $\calT_h^{\rm WF}$
 be the corresponding Worsey-Farin global triangulation.

 The iterated penalty method \cite{BSBook} applied to the Stokes equations with 
$\mathring{\bV}_h = \mathring{\bV}_h^{\rm WF}$ reads : Let
 $\bu_h^0 = \bm 0$ and $\rho,\gamma>0$ be parameters. For $n\geq 1$, $\bu_h^n$ is recursively defined to be the solution to the variational formulation

\begin{equation}\label{Eq:IPM}
\nu(\nabla\bu_h^n,\nabla{\bv}) + \gamma(\nabla\cdot{\bv},\nabla\cdot{\bu_h^n})=({{\bm f},{\bv}}) - (\sum_{i = 0}^{n-1}\rho\nabla\cdot{\bu_h^i},\nabla\cdot{\bv}), \quad \forall \bv\in \mathring{\bV}_h^{\rm WF}.
\end{equation}

 It is shown in \cite{BSBook} that $\lim_{n\to\infty} \bu_h^n = \bu_h$ and $\lim_{n\to\infty}\sum_{i = 0}^{n} \rho\nabla\cdot\bu_h^i = p_h$. Also, 
\cite{BSBook} suggests to use $\|\nab\cdot\bu_h^n\|_{L^2(\Omega)}$ as a stopping criterion since the difference error between $\bu_h^n, \bu_h$
 is given by
 \begin{equation*}
\|\bu_h^n - \bu_h\|_{L^2(\Omega)} \leq C\|\nab\cdot\bu_h^n\|_{L^2(\Omega)}.
\end{equation*}

 The resulting rates of convergence of the numerical experiment are listed in 
Tables \ref{tableNine}. The errors $\|\bu-\bu_h^n\|_{L^2(\Omega)}$ and 
$\|p-p_h^n \|_{L^2(\Omega)}$ are computed 
with $\|\nab\cdot\bu_h^n\|_{L^2(\Omega)} \leq 10^{-7}$ and $\gamma = \rho = 100$.

\begin{table}
\caption{\label{tableNine}Errors and rates of convergence
for example \eqref{eqn:TestProblem2} with $\nu=1$.}
 \begin{tabular}{||c c c c c c c||} 
 \hline
 $h$ & $\|\bu-\bu_h^n\|_{L^2(\Omega)}$ & rate &$|\bu-\bu_h^n|_{H^1(\Omega)}$ & rate & $\|p-p_h^n\|_{L^2(\Omega)}$ & rate  \\ [0.5ex] 
 \hline
    $2^{-2}$ &  1.11768  &  -        &  11.55063 &  -        & 25.32256  &  -          \\
    $2^{-3}$ &  0.48896  &  1.19273  &  7.53829  &  0.61566  & 22.35349  &  0.17992\\
    $2^{-4}$ &  0.15482  &  1.65908  &  4.15598  &  0.85905  & 13.67635  &  0.70882\\
    $2^{-5}$ &  0.04176  &  1.89040  &  2.13224  &  0.96282  &  7.24129  &  0.91736\\
    $1/48$   &  0.01881  &  1.96680  &  1.42643  &  0.99145  &  4.88909  &  0.96875\\
 \hline
\end{tabular}
\end{table}

 In Table \ref{tableTen} we provide a time comparison between the (IPM) and the method described in Algorithm 1.  All of the timings were done on a machine with a single 3.60 GHz Intel Core i9-9900K processor with 128 GB of 2400 MHz DDR4 memory. 
 
 For the iterated penalty method, we select $\gamma=\rho=100$ and terminate iterations once $\|\nab\cdot\bu_h^n\|_{L^2(\Omega)} \leq 10^{-7}$.  At each iteration of IPM, equation~\eqref{Eq:IPM} must be resolved.  In our work, this vector Poisson problem type problem is solved via a conjugate gradient method with an algebraic multigrid (AMG) preconditioner.  It is well known that optimal multigrid methods can be designed for this class of problems~\cite{trottenberg2001multigrid}. 
 
 For Algorithm 1, we instead work with the full discretization matrix~\eqref{Eq:Stiffness_Matrix}, which results in a symmetric indefinite linear system.  To efficiently solve this saddle point problem, a block preconditioned Krylov subspace method is used~\cite{StokesSolver}.  In particular, we precondition the stiffness matrix~\eqref{Eq:Stiffness_Matrix} by the block diagonal matrix
 \[
\begin{pmatrix}
 A & 0
 \\
 0 & S
\end{pmatrix},
 \]
 where $S= B^\intercal A^{-1}B$ is the Schur complement, and the flexible GMRES method as an outer iteration.  The solver is terminated once the Euclidean norm of the residual is less than or equal to $10^{-8}.$  For simplicity, the inner preconditioners are as follows: we use a preconditioner to $A$ for approximating $A^{-1}$, and the Schur complement is approximated as $S \approx B^\intercal \mathrm{diag}(A^{-1})B$.  It should be noted that this is not the only choice for block-type preconditioning of the Stokes problem (e.g., see~\cite{StokesSolver2}); however, our numerical experiments indicate that we can repurpose existing preconditioners to efficiently solve linear systems that arise from the proposed finite element discretization.
 
 From Table \ref{tableTen}, it is evident that for a fixed mesh spacing, the total run times for both methods are competitive, with the IPM technique being slightly slower for larger meshes.   
 We find that in the case of the linear Stokes problem, by utilizing preexisting preconditioners, both approaches provide similar accuracy and time to solution metrics.  
 
 To obtain a Reynolds robust, optimally scaling preconditioner, special multigrid techniques may have to be investigated~\cite{SMAI-JCM_2021__7__75_0,PCPATCH}.

\begin{table}
\caption{\label{tableTen}Time comparison between (IPM) and Algorithm 1 
for example \eqref{eqn:TestProblem2} with $\nu=1$.}
 \begin{tabular}{||c c c||} 
 \hline
 $h$ & (IPM) & Algorithm 1  \\ [0.5ex] 
 \hline
    $2^{-2}$ &  5.08E+00 &  3.35E+00   \\
    $2^{-3}$ &  1.68E+01 &  2.93E+01   \\
    $2^{-4}$ &  4.80E+02 &  2.60E+02   \\
    $2^{-5}$ &  2.39E+03 &  9.37E+02   \\
    $1/48$   &  7.31E+03 &  6.45E+03   \\
 \hline
\end{tabular}
\end{table}

\bibliographystyle{abbrv}
\bibliography{refs}










\end{document}